\theoremstyle{plain}
\newtheorem{lemma}{Lemma}
\newtheorem{prop}[lemma]{Proposition}
\newtheorem{coro}[lemma]{Corollary}
\newtheorem{rema}[lemma]{Remark}
\newtheorem{remark}[lemma]{Remark}
\newtheorem{thm-Intro}{Theorem} 
\newtheorem{cor-Intro}{Corollary} 
\numberwithin{equation}{section}
\begin{document}
\title[Octonionic Hopf fibration and subelliptic heat kernel]{The subelliptic heat kernel of the octonionic Hopf fibration}

 \author{Fabrice Baudoin}
 \thanks{F.B is partially funded by the Simons Foundation and NSF grant DMS-1901315.}
 \author{Gunhee Cho}
 
\address{Department of Mathematics\\
University of Connecticut\\
196 Auditorium Road,
Storrs, CT 06269-3009, USA}

\email{fabrice.baudoin@uconn.edu}
\email{gunhee.cho@uconn.edu}

\begin{abstract} 
We study the sub-Laplacian of the $15$-dimensional unit sphere which is obtained by lifting with respect to the Hopf fibration the Laplacian of the octonionic projective space. We obtain in particular explicit formulas for its heat kernel and deduce an expression for the Green function of a related sub-Laplacian. As a byproduct we also obtain the spectrum of the sub-Laplacian, the small-time asymptotics of the heat kernel and explicitly compute the sub-Riemannian distance. 	 
\end{abstract}

\maketitle

\tableofcontents

\section{Introduction}

R. Escobales proved in \cite{Escobales}  that, up to equivalence, the only Riemannian submersions with connected totally geodesic fibers from a unit sphere are given by:
\begin{enumerate}
\item \underline{The complex Hopf fibrations}:
\[
\mathbb{S}^1 \hookrightarrow \mathbb{S}^{2n+1} \rightarrow {\mathbb{CP}}^n.
\]
\item \underline{The quaternionic Hopf fibrations}:
\[
\mathbb{S}^3 \hookrightarrow \mathbb{S}^{4n+3} \rightarrow {\mathbb{HP}}^n.
\]
\item \underline{The octonionic Hopf fibration}:
\[
\mathbb{S}^7 \hookrightarrow \mathbb{S}^{15} \rightarrow {\mathbb{OP}}^1.
\]
\end{enumerate}

The thorough  study of the horizontal Laplacians and associated heat kernels of the complex and quaternionic Hopf fibrations was respectly done in \cite{FBJW13} and \cite{FBJW14}.
The main goal of the present paper is to complete the picture and study  the geometry, the horizontal Laplacian and the horizontal heat kernel of the octonionic Hopf fibration which is the only remaining case (3).

 

The  horizontal Laplacian of the fibration is the lift on $\mathbb{S}^{15}$ of the Laplace-Beltrami operator of $\mathbb{OP}^1$. However, unlike the submersion $ \mathbb{S}^{2n+1} \rightarrow {\mathbb{CP}}^n$ and the submersion  $ \mathbb{S}^{4n+3} \rightarrow {\mathbb{HP}}^n$ which have been considered in \cite{FBJW13} and \cite{FBJW14}, the fibre $\mathbb{S}^7$ does not admit a Lie group structure, it seems therefore  non-trivial to obtain the explicit description of horizontal Laplacian $L$.  This horizontal Laplacian also appears the sub-Laplacian of a canonical H-type sub-Riemannian structure on $\mathbb{S}^{15}$, see Table 3 in \cite{BGMR}. For this reason, in the sequel the horizontal Laplacian will be referred to as the sub-Laplacian.

Let us briefly describe our main results. Due to the cylindrical symmetries of the fibration, the heat kernel of the sub-Laplacian only depends on two variables: the variable $r$ which is a radial on $\mathbb{OP}^1$ and the variable $\eta$ which is a radial coordinate on the fiber $\mathbb{S}^7$. We prove that in these coordinates, the radial part of the sub-Laplacian writes 
\begin{equation*}
\frac{{\partial}^2}{\partial {r}^2}+(7\cot r-7\tan r)\frac{{\partial}}{\partial {r}}+{\tan^2 r}\left(\frac{{\partial}^2}{\partial {\eta}^2}+6\cot \eta\frac{{\partial}}{\partial {\eta}}\right).
\end{equation*}
As a consequence of this expression for the sub-Laplacian, we are able to derive two expressions for the heat kernel:

(1) \underline{A Minakshisundaram-Pleijel spectral expansion}:
For $r\in [0,\frac{\pi}{2}),\eta\in [0,\pi)$, we have:
\begin{equation*}
p_t(r,\eta)=\sum_{m=0}^{\infty}\sum_{k=0}^{\infty}\alpha_{k,m} \frac{\Gamma(7/2)}{\sqrt{\pi}\Gamma(3)}\int_{0}^{\pi}{(\cos \eta + \sqrt{-1}\sin \eta \cos \varphi)}^m{\sin^5 \varphi}d \varphi,
\end{equation*}
\begin{equation*}
\times e^{-(8m+4k(k+m+7))t}\cos^m r P^{3,m+3}_k(\cos 2r),
\end{equation*}
where $\alpha_{k,m}=\frac{96}{\pi^{8}}(m+3)(2k+m+7)\left(\begin{array}{c}
k+m+6 \\ k+m+3
\end{array}\right)\left(\begin{array}{c}
m+5 \\ m
\end{array}\right)$ and

\begin{equation*}
P^{3,m+3}_k(x)=\frac{(-1)^k}{2^k k! (1-x)^{3}(1+x)^{m+3}}\frac{{d}^k}{d {x}^k}((1-x)^{k+3}(1+x)^{m+3+k})
\end{equation*} 
is a Jacobi polynomial.  In particular, the spectrum of ${-L}$ is given by $$\{ 4k(k+m+7)+8m : m,k\geq 0 \}.$$

This spectral expansion is useful to study the long-time behavior of the heat kernel but it might be difficult to use in the study of small-time asymptotics. In order to derive small-time asymptotics of the heat kernel, we give another analytic expression for $p_t(r,\eta)$.

(2) \underline{An integral representation}:

For $r\in [0,\frac{\pi}{2}),\eta\in [0,\pi)$, we have:
\begin{equation}\label{eq:8}
p_t(r,\eta)=\frac{48e^{15t}}{{\pi}^2\sqrt{\pi t}\cos^2 r}\int_{0}^{\infty}\frac{g_t(\eta,y)}{t^2} e^{{-}\frac{y^2+{\eta}^2}{4t}}q_t(\cos r \cosh y)\sinh y dy,
\end{equation}
where 
\begin{align*}
g_t(\eta,y)&=\csc ^3(\eta)\left( \cos \left(\frac{\eta y}{2 t}\right)2 y (\eta-3 t \cot \eta)\right.\\
&+\left.\sin \left(\frac{\eta y}{2 t}\right) \left(8 t^2 \cot ^2 \eta+4 t^2 \csc ^2 \eta-6 t \eta \cot \eta+2 t+\eta^2-y^2\right)\right).
\end{align*}

and $q_t$ is the Riemannian heat kernel on $\mathbb{S}^{11}$. We obtain this formula by comparing the subelliptic heat kernel of the sub-Laplacian associated to the quaternionic Hopf fibration. From this formula we are able to deduce the fundamental solution of the operator $-L+40$; It is given in cylindrical coordinates by

\begin{equation*}
G(r,\eta)=\frac{C}{\left(-2 \cos r \cos \eta+\cos ^2 r+1\right)^5},
\end{equation*}
where $C>0$ is an explicit constant.

Furthermore, we also derive three different behaviors of the small-time asymptotics of the heat kernel: on the diagonal, on the vertical cut-locus, and outside of the cut-locus. As an interesting by-product of this small-time asymptotics we obtain an explicit formula for the sub-Riemannian distance on the octonionic unit sphere. In particular, we obtain that the sub-Riemannian diameter of the octonionic fibration is given by $\pi$.

\section{Preliminary: The geometry of the octonionic Hopf fibration }

In this section, we  describe the octonionic Hopf fibration. We refer to \cite{LOMPPPVV14} for additional and complementary details.

We consider the non-associative (but alternative) division algebra of octonions which is described by 
\begin{equation*}
\mathbb{O}=\left\{x=\sum_{j=0}^{7}x_j e_j, x_j\in \mathbb{R} \right\},
\end{equation*}
where the multiplication rules are given by 
\begin{equation*}
e_i e_j=e_j \text{  if  } i=0,
\end{equation*}
\begin{equation*}
e_i e_j=e_i \text{  if  } j=0,
\end{equation*}
\begin{equation*}
e_i e_j=-\delta_{ij}e_0+\epsilon_{ijk}e_k \text{  otherwise},
\end{equation*}
where $\delta_{ij}$ is the Kronecker delta and $\epsilon_{ijk}$ is the completely antisymmetric tensor with value $1$ when $ijk = 123, 145, 176, 246, 257, 347, 365$.


The octonionic norm is defined for $x \in \mathbb{O}$ by
\begin{equation*}
\|x\|^2=\sum_{j=0}^{7}x^2_j.
\end{equation*}

The  unit sphere in $\mathbb{O}^2$  is given by
\begin{equation*}
\mathbb{S}^{15}=\{(x,y)\in\mathbb{O}^2, \|x \|^2+\|y\|^2=1 \}.
\end{equation*}

 We have a Riemannian submersion $\pi : \mathbb{S}^{15} \rightarrow \mathbb{OP}^1$, given by $(x,y) \mapsto [x:y]$, where $[x:y]=y^{-1}x$. Then the vertical distribution $\mathcal{V}$ and the horizontal distribution $\mathcal{H}$  of $T\mathbb{S}^{15}$ are defined by $\ker d\pi$ and the orthogonal complement of $\mathcal{V}$ respectively so that $T\mathbb{S}^{15}=\mathcal{H} \oplus \mathcal{V}$. Note that $\pi : \mathbb{S}^{15} \rightarrow \mathbb{OP}^1$ has totally geodesic fibers, and for each $b\in \mathbb{OP}^1$, the  fiber ${\pi}^{-1}(\{b\})$ is isometric to $\mathbb{S}^7$ with the standard sphere metric $g_{\mathbb{S}^7}$. 
 
 This submersion $\pi$ yields the octonionic Hopf fibration:
\begin{equation*}
\mathbb{S}^7 \hookrightarrow \mathbb{S}^{15} \rightarrow \mathbb{OP}^1.
\end{equation*}

The submersion $\pi$ also yields an $H$-type foliation structure in the sense of \cite{BGMR} and thus $\mathbb{S}^{15}$ carries a sub-Riemannian structure inherited from this foliation.

Unlike the quaternionic fibration, see Section 3.1 in \cite{FBJW14}, the octonionic fibration is not projectable. Indeed, In addition to the octonionic Hopf fibration 
\[
\mathbb{S}^7 \hookrightarrow \mathbb{S}^{15} \rightarrow {\mathbb{OP}}^1
\]
that is considered in this paper, on $\mathbb{S}^{15}$ one can also consider the complex Hopf fibration
\[
\mathbb{S}^1 \hookrightarrow \mathbb{S}^{15} \rightarrow {\mathbb{CP}}^7
\]
and the quaternionic one
\[
\mathbb{S}^3 \hookrightarrow \mathbb{S}^{15} \rightarrow {\mathbb{HP}}^3.
\]

Similarly to the projection of fibration procedure explained in Section 3.1 in \cite{FBJW14}, this would potentially yield two commutative diagrams:

\begin{diagram}\label{diag1}
  & & \mathbb{S}^1 & & \\
  & \ldTo & \dTo & & \\
\mathbb{S}^7& \rTo &\mathbb{S}^{15} & \rTo & \mathbb{OP}^1 \\
 \dTo & &\dTo & \ruTo & \\
 \mathbb{CP}^3 & \rTo & \mathbb{CP}^{7} & & 
\end{diagram}

and 

\begin{diagram}\label{diag2}
  & & \mathbb{S}^3 & & \\
  & \ldTo & \dTo & & \\
 \mathbb{S}^7 & \rTo &\mathbb{S}^{15} & \rTo & \mathbb{OP}^1 \\
 \dTo & &\dTo & \ruTo & \\
 \mathbb{HP}^1 & \rTo & \mathbb{HP}^{3} & & 
\end{diagram}

However, unlike the quaternionic case \cite{FBJW14}, those diagrams actually do not exist. Indeed,  in the first diagram the submersion
\[
  \mathbb{CP}^{7} \to \mathbb{OP}^1
\]

does not exist, see \cite{Ucci} and  \cite{AR85} page 258.  In the second diagram the submersion
\[
 \mathbb{HP}^{3}  \to \mathbb{OP}^1
\]
does not exist, see \cite{Ucci} and \cite{Escobales2}.
\section{Cylindric coordinates and radial part of the sub-Laplacian}

The geometry of the octonionic Hopf fibration shares many properties with the geometry of the complex and quaternionic Hopf fibration. Similarly,  the analysis of the octonionic sub-Laplacian on $\mathbb{S}^{15}$  parallels the ones of the complex and quaternionic sub-Laplacians which were undertaken in \cite{FBJW13, FBJW14}.

The sub-Laplacian $L$ on $\mathbb{S}^{15}$ we are interested in  is the horizontal Laplacian of the Riemannian submersion $\pi:\mathbb{S}^{15} \to \mathbb{OP}^1$, i.e the horizontal lift of the Laplace-Beltrami operator of $\mathbb{OP}^1$. It can be written as
\[
L=\triangle_{\mathbb{S}^{15}}-\triangle_\mathcal{V},
\]
where $\triangle_\mathcal{V}$ is the vertical Laplacian. Since the fibers of $\pi$ are totally geodesic, we note that $\triangle_{\mathbb{S}^{15}}$ and $\triangle_\mathcal{V}$ are commuting operators (see \cite{BGLB82}). Since the horizontal distribution of the octonionic fibration is bracket-generating, $L$ is a hypoelliptic operator. We note that the sub-Riemannian structure induced by the fibration is even fat; it is actually an H-type sub-Riemannian structure in the sense of \cite{BGMR}, see Remark 2.16 in \cite{BGMR}.

To study $L$, we introduce a  set of coordinates that reflect the cylindrical symmetries of the octonionic unit sphere with respect to the octonionic Hopf fibration.  Take local coordinates $w\in {\mathbb{OP}}^1\backslash \{\infty \}$ and $(\theta_1, ..., \theta_7)\in \mathbb S^7$, where $w$ is the  inhomogeneous coordinate on ${\mathbb{OP}}^1\backslash \{\infty \}$ given by $w=y^{-1} x$, where $x,y\in \mathbb{O}$. Consider the  pole $p=(1,0,\cdots,0) \in \mathbb{S}^7$, take $Y_1, ..., Y_7$ to be an orthonormal frame of $T_p \mathbb{S}^7$ and denote $\exp_p$ the Riemannian exponential map at $p$ on $\mathbb{S}^7$. Then the cylindrical coordinates we work with are given by 
\begin{equation*}
(w,\theta_1, ..., \theta_7) \mapsto \left(\frac{ \exp_p(\sum_{i=1}^{7}\theta_i Y_i) w}{\sqrt{1+\|w\|^2}},\frac{ \exp_p(\sum_{i=1}^{7}\theta_i Y_i)}{\sqrt{1+\|w\|^2}}\right)\in \mathbb{S}^{15}.
\end{equation*}
This parametrizes the set  $\Omega=\{ (x,y) \in \mathbb{S}^{15}, y  \neq 0,   \frac{y}{\| y \|}  \neq q\}$ where $q$ denotes the antipodal point to $p$.

A key property of those coordinates is that since the octonionic multiplication is alternating one has for the submersion $\pi : \mathbb{S}^{15} \rightarrow \mathbb{OP}^1$
\[
\pi \left(\frac{ \exp_p(\sum_{i=1}^{7}\theta_i Y_i) w}{\sqrt{1+\|w\|^2}},\frac{ \exp_p(\sum_{i=1}^{7}\theta_i Y_i)}{\sqrt{1+\|w\|^2}}\right)=w.
\]
Thus $\theta_1,\cdots, \theta_7$ are fiber coordinates for the octonionic Hopf fibration. 

The fiber $\mathbb S^7$ and the base space $\mathbb{OP}^1$ are both rank one symmetric spaces, thus are two point homogeneous spaces (see chapter 3 in \cite{MR496885}). As a consequence, the heat kernel will actually only depend on two coordinates: a radial coordinate on $\mathbb S^7$ and a radial coordinate on $\mathbb{OP}^1$. We can make this precise as follows.


%
%
Let us denote by $\psi$ the map on $\Omega $  such that 
\begin{equation*}
\psi \left(\frac{ \exp_p(\sum_{i=1}^{7}\theta_i Y_i) w}{\sqrt{1+\|w\|^2}},\frac{ \exp_p(\sum_{i=1}^{7}\theta_i Y_i)}{\sqrt{1+\|w\|^2}}\right)= \left( r, \eta \right),
\end{equation*}
where $r=\arctan \|w\|\in [0,\pi/2), \eta=\| \theta \| \in [0,\pi)$.

The variable $r$ can be interpreted as the Riemannian distance on $\mathbb{OP}^1$ from the point $w=0$. The variable $\eta$ can be interpreted  as the Riemannian distance from $p$ on $\mathbb{S}^7$.  We note that geometrically the boundary of $\Omega$ corresponds to the boundary values $r=\pi/2$, $\eta=\pi$.

We denote by $\mathcal{D}$ the space and smooth and compactly supported functions on $[0,\pi/2) \times [0,\pi)$. Then the radial part of $L$ is defined as the operator $\widetilde{L}$  such that for any $f\in \mathcal{D}$, we have
\begin{equation*}
L(f \circ \psi)=(\widetilde{L}f)\circ \psi.
\end{equation*}

We now compute $\widetilde{L}$ in cylindric coordinates.

\begin{prop}
	The radial part of the sub-Laplacian on $\mathbb{S}^{15}$ is given in the coordinates $(r,\eta)$ by the operator
\begin{equation}\label{eq:5}
\widetilde{L}=\frac{{\partial}^2}{\partial {r}^2}+(7\cot r-7\tan r)\frac{{\partial}}{\partial {r}}+{\tan^2 r}\left(\frac{{\partial}^2}{\partial {\eta}^2}+6\cot \eta\frac{{\partial}}{\partial {\eta}}\right).
\end{equation}	

\begin{proof}
The idea is to compute first the radial part $\tilde{\triangle}_{\mathbb{S}^{15}}$ of the Laplace-Beltrami operator on $\mathbb{S}^{15}$ and then use the fact that $L=\triangle_{\mathbb{S}^{15}}-\triangle_\mathcal{V}$. Since the octonionic Hopf fibration defines a totally geodesic submersions with base space $\mathbb{OP}^1$ and fiber $\mathbb{S}^{7}$, the Riemannian metric $g_{\mathbb{S}^{15}}$ on $\mathbb{S}^{15}$ is locally given by a warped metric $g_{\mathbb{S}^{7}}\oplus fg_{\mathbb{OP}^{1}}$ between the Riemannian metric $g_{\mathbb{OP}^{1}}$ of $\mathbb{OP}^1$ and the Riemannian metric $g_{\mathbb{S}^{7}}$ on $\mathbb{S}^{7}$, where $f$ is a smooth and positive function on $\mathbb{S}^{7}$;  See 9.11 in \cite{MR867684} for a discussion of warped products in the context of submersions.

As Riemannian manifolds, $\mathbb{OP}^1$ and $\mathbb{S}^{7}$ are compact rank one symmetric spaces. General formulas for the radial parts of Laplacians on rank one symmetric spaces are well-known (for example, see for instance chapter 3 in \cite{MR496885},  but also \cite{SH65} and \cite{MR754767}).
In particular, the radial part of the  Laplace-Beltrami operator on $\mathbb{OP}^1$ is
\[
\frac{{\partial}^2}{\partial {r}^2}+(7\cot r-7\tan r)\frac{{\partial}}{\partial {r}}
\]
and the radial part of the Laplace-Beltrami operator on $\mathbb{S}^{7}$ is
\[
\frac{{\partial}^2}{\partial {\eta}^2}+6\cot \eta\frac{{\partial}}{\partial {\eta}}
\]

One deduces that 
\[
\tilde{\triangle}_{\mathbb{S}^{15}}=\frac{{\partial}^2}{\partial {r}^2}+(7\cot r-7\tan r)\frac{{\partial}}{\partial {r}} +g(r) \left(\frac{{\partial}^2}{\partial {\eta}^2}+6\cot \eta\frac{{\partial}}{\partial {\eta}} \right)
\]
for some function $g$ to be computed.
One can compute $g$ by observing that on $\mathbb{S}^{15}$  the Riemannian distance $\delta$ from the point with octonionic coordinates $(0,1)=(0,p)\in \mathbb{O}^2$ to the point
\[
\left(\frac{ \exp_p(\sum_{i=1}^{7}\theta_i Y_i) w}{\sqrt{1+\|w\|^2}},\frac{ \exp_p(\sum_{i=1}^{7}\theta_i Y_i)}{\sqrt{1+\|w\|^2}}\right).
\]
is given by
\[
\cos \delta = \cos r \cos \eta
\]
because the right and left hand side of the above equality are both the 9th Euclidean coordinate of
\[
\left(\frac{ \exp_p(\sum_{i=1}^{7}\theta_i Y_i) w}{\sqrt{1+\|w\|^2}},\frac{ \exp_p(\sum_{i=1}^{7}\theta_i Y_i)}{\sqrt{1+\|w\|^2}}\right),
\]
where we use the fact that
\[
\frac{\cos \eta}{\sqrt{1+\|w\|^2}} =\frac{\cos \eta}{\sqrt{1+\tan^2 r}}=\cos r \cos \eta.
\]
From the formula for the radial part of Laplacian on ${\mathbb{S}^{15}}$ starting from the north pole,  we can compute
\begin{align*}
\tilde{\triangle}_{\mathbb{S}^{15}} (\cos \delta ) &=\left(\frac{\partial^2}{\partial \delta^2}+14\cot{\delta}\frac{\partial}{\partial \delta}\right) \cos \delta \\
&=-15 \cos \delta.
\end{align*}
Using the other representation of $\tilde{\triangle}_{\mathbb{S}^{15}}$, one deduces
\[
\left(\frac{{\partial}^2}{\partial {r}^2}+(7\cot r-7\tan r)\frac{{\partial}}{\partial {r}} +g(r) \left(\frac{{\partial}^2}{\partial {\eta}^2}+6\cot \eta\frac{{\partial}}{\partial {\eta}} \right) \right)\cos r \cos \eta =-15 \cos r \cos \eta.
\]
After a straightforward computation, this yields $g(r)=\frac{1}{\cos^2 r}$ and therefore
\[
\tilde{\triangle}_{\mathbb{S}^{15}}=\frac{{\partial}^2}{\partial {r}^2}+(7\cot r-7\tan r)\frac{{\partial}}{\partial {r}} +\frac{1}{\cos^2 r} \left(\frac{{\partial}^2}{\partial {\eta}^2}+6\cot \eta\frac{{\partial}}{\partial {\eta}} \right).
\]
Finally, to conclude, one notes that the sub-Laplacian $L$ is given by the difference between the Laplace-Beltrami operator of $\mathbb{S}^{15}$ and the vertical Laplacian.  Therefore,
\begin{align*}
\tilde{L} &=\tilde{\triangle}_{\mathbb{S}^{15}}- \left(\frac{{\partial}^2}{\partial {\eta}^2}+6\cot \eta\frac{{\partial}}{\partial {\eta}} \right)\\
&= \frac{{\partial}^2}{\partial {r}^2}+(7\cot r-7\tan r)\frac{{\partial}}{\partial {r}}+{\tan^2 r}\left(\frac{{\partial}^2}{\partial {\eta}^2}+6\cot \eta\frac{{\partial}}{\partial {\eta}}\right).
\end{align*}
%
%
%
\end{proof}
	
\end{prop}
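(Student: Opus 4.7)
The plan is to compute the radial part of $\Delta_{\mathbb{S}^{15}}$ using the warped product structure coming from the Riemannian submersion $\pi : \mathbb{S}^{15} \to \mathbb{OP}^1$, and then subtract the radial part of the vertical Laplacian $\Delta_{\mathcal{V}}$ to obtain $\widetilde{L}$. Since the fibers of $\pi$ are totally geodesic and isometric to $\mathbb{S}^7$, and since both $\mathbb{S}^7$ and $\mathbb{OP}^1$ are rank one symmetric spaces, the two functions of interest $r$ and $\eta$ are genuine radial coordinates on base and fiber respectively, and the metric on $\mathbb{S}^{15}$ splits locally as a warped product $g_{\mathbb{S}^7} \oplus f(r)\, g_{\mathbb{OP}^1}$ for some positive function $f$.

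First I would quote the standard radial expressions for the Laplacians on the two rank one factors, namely $\partial_r^2+(7\cot r-7\tan r)\partial_r$ on $\mathbb{OP}^1$ and $\partial_\eta^2+6\cot\eta\,\partial_\eta$ on $\mathbb{S}^7$. The warped product structure then forces
\[
\tilde{\Delta}_{\mathbb{S}^{15}}=\frac{\partial^2}{\partial r^2}+(7\cot r-7\tan r)\frac{\partial}{\partial r}+g(r)\Bigl(\frac{\partial^2}{\partial\eta^2}+6\cot\eta\frac{\partial}{\partial\eta}\Bigr),
\]
for an as yet undetermined function $g(r)$ that encodes the relative scaling between horizontal and vertical directions.

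To pin down $g(r)$, the key trick is to evaluate both sides of the above identity on an explicit test function whose $\tilde{\Delta}_{\mathbb{S}^{15}}$ image can also be computed using the standard radial Laplacian on $\mathbb{S}^{15}$ centered at the pole $(0,p)$. A natural choice is $\cos\delta$, where $\delta$ is the Riemannian distance on $\mathbb{S}^{15}$ from $(0,p)$. I would verify geometrically that $\cos\delta=\cos r\cos\eta$ by identifying both quantities with the $9$-th Euclidean coordinate of the cylindrically parametrized point, using $\cos\eta/\sqrt{1+\|w\|^2}=\cos r\cos\eta$. On the one hand, $\cos\delta$ is a first degree spherical harmonic on $\mathbb{S}^{15}$, so $\tilde{\Delta}_{\mathbb{S}^{15}}\cos\delta=(\partial_\delta^2+14\cot\delta\,\partial_\delta)\cos\delta=-15\cos\delta$. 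On the other hand, applying the warped-product form to $\cos r\cos\eta$ and matching with $-15\cos r\cos\eta$ is a direct trigonometric computation that yields $g(r)=1/\cos^2 r$.

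Having established $\tilde{\Delta}_{\mathbb{S}^{15}}$, the last step is routine: since $L=\Delta_{\mathbb{S}^{15}}-\Delta_{\mathcal{V}}$ and the radial part of $\Delta_{\mathcal{V}}$ is precisely $\partial_\eta^2+6\cot\eta\,\partial_\eta$, subtracting gives the stated expression with the coefficient $\tan^2 r=\sec^2 r-1$. The only delicate step is the identification of $g(r)$, which I would expect to be the main point requiring care; everything else is either symmetry or a known formula for radial Laplacians on rank one symmetric spaces.
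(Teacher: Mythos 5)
Your argument matches the paper's proof step by step: warped-product decomposition of the metric, the known radial Laplacians on the rank-one factors $\mathbb{OP}^1$ and $\mathbb{S}^7$, determination of the unknown scaling function $g(r)$ by testing against $\cos\delta=\cos r\cos\eta$ and using $\tilde{\triangle}_{\mathbb{S}^{15}}\cos\delta=-15\cos\delta$, and finally subtracting the vertical Laplacian. This is essentially identical to the published proof and is correct.
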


\begin{rema}
As a consequence of the previous result, we can check that the Riemannian measure of $\mathbb{S}^{15}$ in the coordinates $(r,\eta)$, which is the symmetric and invariant measure for $\tilde{L}$ is given by 
\begin{equation}\label{eq:5}
d\overline{\mu}=\frac{56{\pi}^7}{\Gamma(8)}\sin^7 r \cos^7r\sin^6 \eta dr d\eta,
\end{equation}
where the normalization is chosen in such a way that
\begin{equation*}
\int_{0}^{\pi}\int_{0}^{\frac{\pi}{2}}d\overline{\mu}=\mathbf{Vol} (\mathbb{S}^{15})=\frac{2\pi^8}{\Gamma(8)}.
\end{equation*}
\end{rema}

\section{Spectral expansion of the subelliptic heat kernel}

In this section, we derive the spectral decomposition of the subelliptic heat kernel of the heat semigroup $P_t=e^{tL}$ issued from the north pole (i.e. the point with octonionic coordinates $(0,p)=(0,1) \in \mathbb{O}^2$). Notice that due to the cylindric symmetry, the heat kernel that we denote $p_t(r,\eta)$ will only depend on the coordinates $(r,\eta)$. We first prove the following  spectral expansion theorem.

We will need the Jacobi polynomial
\begin{equation*}
P^{3,m+3}_k(x)=\frac{(-1)^k}{2^k k! (1-x)^{3}(1+x)^{m+3}}\frac{{d}^k}{d {x}^k}((1-x)^{k+3}(1+x)^{m+3+k}).
\end{equation*}

\begin{prop}\label{prop:2}
For $t>0$, $r\in [0,\frac{\pi}{2})$, $\eta \in [0,\pi)$, the subelliptic kernel is given by 
\begin{equation*}
p_t(r,\eta)=\sum_{m=0}^{\infty}\sum_{k=0}^{\infty}\alpha_{k,m} h_m(\eta)e^{-4(2m+k(k+m+7))t}(\cos r)^m P^{3,m+3}_k(\cos 2r),
\end{equation*}
where $\alpha_{k,m}=\frac{96}{\pi^{8}}(m+3)(2k+m+7)\left(\begin{array}{c}
k+m+6 \\ k+m+3
\end{array}\right)\left(\begin{array}{c}
m+5 \\ m
\end{array}\right)$ , and
\[
h_m(\eta)=\frac{\Gamma(7/2)}{\sqrt{\pi}\Gamma(3)}\int_{0}^{\pi}{(\cos \eta + \sqrt{-1}\sin \eta \cos \varphi)}^m{\sin^5 \varphi}d \varphi
\]
 is the normalized eigenfunction of $\widetilde{\triangle}_{S^7}=\frac{{\partial}^2}{\partial {\eta}^2}+6\cot \eta\frac{{\partial}}{\partial {\eta}}$ which is associated to the eigenvalue $-m(m+6)$.
\begin{proof}
We expand $p_t(r,\eta)$ in spherical harmonics as follows,
\begin{equation*}
p_t(r,\eta)=\sum_{m=0}^{\infty}h_m(\eta)\phi_m(t,r),
\end{equation*}
where $h_m(\eta)$ is the eigenfunction of $\widetilde{\triangle_{S^7}}=\frac{{\partial}^2}{\partial {\eta}^2}+6\cot \eta\frac{{\partial}}{\partial {\eta}}$ which is associated to the eigenvalue $-m(m+6)$. More precisely, $h_m(\eta)$ is given by
\begin{equation*}
h_m(\eta)=\frac{\Gamma(7/2)}{\sqrt{\pi}\Gamma(3)}\int_{0}^{\pi}{(\cos \eta + \sqrt{-1}\sin \eta \cos \varphi)}^m{\sin^5 \varphi}d \varphi
\end{equation*}
(for example, see proposition 9.4.4 in \cite{JF08}).

To determine $\phi_m$, we use $\frac{{\partial}}{\partial {t}}p_t=\widetilde{L} p_t$, and find 
\begin{equation*}
\frac{{\partial}\phi_m}{\partial {t}} = \frac{{\partial}^2\phi_m}{\partial {r}^2}+(7\cot r-7\tan r)\frac{{\partial}\phi_m}{\partial {r}}-m(m+6){\tan^2 r}\phi_m. 
\end{equation*}
Let $\phi_m:=e^{-8mt}(\cos r)^m \psi_m$. This substitution gives 
\begin{equation*}
\frac{{\partial}\psi_m}{\partial {t}} = \frac{{\partial}^2\psi_m}{\partial {r}^2}+(7\cot r-(2m+7)\tan r)\frac{{\partial}\psi_m}{\partial {r}}. 
\end{equation*}
Letting $\psi_m(t,r):=g_m(t,\cos 2r)$. Then the previous equation becomes 
\begin{equation*}
\frac{{\partial}g_m}{\partial {t}} =4(1-x^2) \frac{{\partial}^2 g_m}{\partial {x}^2}+4((m-(m+8)x)\frac{{\partial}g_m}{\partial {x}}. 
\end{equation*}
We get $\frac{{\partial}g_m}{\partial {t}}=4\Psi_m(g_m)$, where 
\begin{equation*}
\Psi_m=(1-x^2)\frac{{\partial}^2}{\partial {x}^2}+((m-(m+8)x)\frac{{\partial}}{\partial {x}}.
\end{equation*}
Note that the equation 
\begin{equation*}
\Psi_m(g_m)+k(k+m+7)g_m=0
\end{equation*}
is a Jacobi differential equation for all $k \geq 0$. We denote the eigenvector of $\Psi_m$ corresponding to the eigenvalue $-k(k+m+7)$ by $P^{3,m+3}_k(x)$, which is given by 
\begin{equation*}
P^{3,m+3}_k(x)=\frac{(-1)^k}{2^k k! (1-x)^{3}(1+x)^{m+3}}\frac{{d}^k}{d {x}^k}((1-x)^{k+3}(1+x)^{m+3+k}).
\end{equation*}
(for the details about Jacobi differential equations, for example, see \cite{NDMZ09}, appendix in \cite{FBJW16} and the references therein for further details). At the end we can therefore write the spectral decomposition of $p_t$ as 
\begin{equation*}
p_t(r,\eta)=\sum_{m=0}^{\infty}\sum_{k=0}^{\infty}\alpha_{k,m} h_m(\eta)e^{-4(k(k+m+7)+2m)t}\cos^m r P^{3,m+3}_k(\cos 2r). 
\end{equation*}
where the constants $\alpha_{k,m}$'s have to be determined by considering the initial condition. 

Note that $((1+x)^{\frac{m+3}{2}} P^{3,m+3}_k(x))_{k\geq 0}$ is a complete orthogonal basis of the Hilbert space $L^2([-1,1],(1-x)^{3}dx)$, more precisely 
\begin{equation*}
\int_{-1}^{1}P^{3,m+3}_k(x)P^{3,m+3}_l(x)(1-x)^{3}(1+x)^{m+3}dx
\end{equation*}
\begin{equation*}
=\frac{2^{m+7}}{2k+m+7}\frac{\Gamma(k+4)\Gamma(k+m+4)}{\Gamma(k+m+7)\Gamma(k+1)}\delta_{kl}. 
\end{equation*}

On the other hand, $(h_m(\eta))_{m\ge 0}$ are the eigenfunctions of the self adjoint operator $\widetilde{\triangle_{S^7}}$ and thus form a complete orthonormal basis of $L^2 ( [0, \pi], (\sin \eta)^6  d\eta)$.

Thus, using the fact that $\left( \frac{1+\cos 2r}{2}\right)^{1/2} =\cos r$, for a smooth function $f(r,\eta)$, we can write 
\begin{equation*}
f(r,\eta)=\sum_{m=0}^{\infty}\sum_{k=0}^{\infty}\beta_{k,m} h_m(\eta)\cos^m r P^{3,m+3}_k(\cos 2r)
\end{equation*}
where the $\beta_{k,m}$'s are constants. We obtain then 
\begin{equation*}
f(0,0)=\sum_{m=0}^{\infty}\sum_{k=0}^{\infty}\beta_{k,m}P^{3,m+3}_k(1)
\end{equation*}
and we observe that $P^{3,m+3}_k(1)=\left(\begin{array}{c}
3+k \\ k
\end{array}\right)$. From \eqref{eq:5}, the measure $d\overline{\mu}$ is given in cylindric coordinates by 
\begin{equation*}
d\overline{\mu} = \frac{56{\pi}^7}{\Gamma(8)}(\sin r)^{7}(\cos r)^{7}(\sin \eta)^{6}dr d\eta.
\end{equation*}
Moreover, we have
\begin{align*}
&\int_{0}^{\pi}\int_{0}^{\frac{\pi}{2}}p_t(r,\eta)f(-r,-\eta)d\overline{\mu}\\
&=\frac{56{\pi}^7}{\Gamma(8)}\sum_{m=0}^{\infty}\sum_{k=0}^{\infty}\alpha_{k,m}\beta_{k,m}e^{-4(k(k+m+7)+2m)t}\int_{0}^{\pi} {h_m(\eta)}^2 \sin^6{\eta} d\eta\times \int_{0}^{\frac{\pi}{2}}{(\cos r)}^{2m+7}{P^{3,m+3}_k (\cos(2r))}^{2}{(\sin r)}^7 dr\\
&=\frac{56{\pi}^7}{\Gamma(8)}\sum_{m=0}^{\infty}\sum_{k=0}^{\infty}\frac{\alpha_{k,m}\beta_{k,m}e^{-4(k(k+m+7)+2m)t}}{2k+m+7}\times \frac{\sqrt{\pi} \Gamma(7/2)}{\Gamma(4)}\frac{6!m!}{(m+5)!}\frac{\Gamma(k+4)\Gamma(k+m+4)}{\Gamma(k+m+7)\Gamma(k+1)}.
\end{align*}

Above, we used 
\begin{equation*}
\int_{0}^{{\pi}} {h_m(\eta)}^2 \sin^6{\eta} d\eta=\frac{\sqrt{\pi} \Gamma(7/2)}{\Gamma(4)(2m+6)}\frac{6!m!}{(m+5)!},
\end{equation*}
(for example, see the Corollary 9.4.3 in \cite{JF08}).

From
\begin{equation*}
\lim_{t \rightarrow 0}\int_{0}^{\pi}\int_{0}^{\frac{\pi}{2}} p_t f d\mu = f(0,0),
\end{equation*}
we obtain the desired term  $\alpha_{k,m}=\frac{96}{\pi^{8}}(m+3)(2k+m+7)\left(\begin{array}{c}
k+m+6 \\ k+m+3
\end{array}\right)\left(\begin{array}{c}
m+5 \\ m
\end{array}\right)$ and the proof is over.
\end{proof}	
	
\end{prop}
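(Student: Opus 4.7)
The plan is to apply separation of variables with respect to the fiber coordinate, exploiting the fact that the $\eta$-part of $\widetilde{L}$ as computed in the preceding proposition is exactly the radial Laplace--Beltrami operator on $\mathbb{S}^7$. Since the normalized eigenfunctions $h_m(\eta)$ form a complete orthonormal basis of $L^2([0,\pi],\sin^6\eta\,d\eta)$ with eigenvalues $-m(m+6)$, I would write $p_t(r,\eta)=\sum_{m\geq 0}h_m(\eta)\phi_m(t,r)$ and substitute into the heat equation $\partial_t p_t=\widetilde{L}p_t$. The $\tan^2 r$ factor turns the eigenvalue $-m(m+6)$ into a potential term, leaving the parabolic equation
\begin{equation*}
\partial_t\phi_m=\partial_r^2\phi_m+(7\cot r-7\tan r)\partial_r\phi_m-m(m+6)\tan^2 r\,\phi_m
\end{equation*}
for each mode $m$.

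Next I would look for a gauge transform that removes the potential. The natural choice is $\phi_m(t,r)=e^{-8mt}(\cos r)^m\psi_m(t,r)$: the exponential prefactor accounts for the constant shift that arises from differentiating $(\cos r)^m$, and a direct verification shows $\psi_m$ satisfies a purely first-order drift equation $\partial_t\psi_m=\partial_r^2\psi_m+(7\cot r-(2m+7)\tan r)\partial_r\psi_m$, free of zeroth-order term. The change of variable $x=\cos 2r$ then conjugates this operator into the Jacobi operator $\Psi_m=(1-x^2)\partial_x^2+(m-(m+8)x)\partial_x$, whose eigenfunctions are the Jacobi polynomials $P_k^{3,m+3}(x)$ with eigenvalues $-k(k+m+7)$. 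Combining the three factors gives the time decay $e^{-4(2m+k(k+m+7))t}$ and the spatial profile $(\cos r)^m P_k^{3,m+3}(\cos 2r)$, producing a double series with unknown coefficients $\alpha_{k,m}$.

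To fix the $\alpha_{k,m}$ I would use the initial condition that $p_t$ is the kernel of $e^{tL}$ based at the north pole, equivalently $\lim_{t\to 0}\int p_t f\,d\overline\mu=f(0,0)$ for every smooth $f$. Expanding a test function $f$ in the same basis as $f=\sum_{k,m}\beta_{k,m}h_m(\eta)(\cos r)^m P_k^{3,m+3}(\cos 2r)$, the right-hand side becomes $\sum_{k,m}\beta_{k,m}P_k^{3,m+3}(1)$ because $h_m(0)=1$, while the left-hand side is computed via the orthogonality relations
\begin{equation*}
\int_{-1}^{1}P_k^{3,m+3}(x)^2(1-x)^3(1+x)^{m+3}dx=\frac{2^{m+7}}{2k+m+7}\frac{\Gamma(k+4)\Gamma(k+m+4)}{\Gamma(k+m+7)\Gamma(k+1)}
\end{equation*}
for the Jacobi polynomials and the known $L^2$-norm of $h_m$ against $\sin^6\eta\,d\eta$. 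Matching term by term and invoking $P_k^{3,m+3}(1)=\binom{k+3}{k}$ isolates $\alpha_{k,m}$.

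The routine steps are the gauge transform and the reduction to Jacobi polynomials; the main obstacle is the final bookkeeping of Gamma-function factors in the normalization, where the prefactors $56\pi^7/\Gamma(8)$ from the measure, $2^{m+7}/(2k+m+7)$ from the Jacobi orthogonality, and $\sqrt{\pi}\Gamma(7/2)\cdot 6!m!/(\Gamma(4)(2m+6)(m+5)!)$ from $\|h_m\|_{L^2}^2$ must collapse exactly into the quoted $\frac{96}{\pi^8}(m+3)(2k+m+7)\binom{k+m+6}{k+m+3}\binom{m+5}{m}$. Verifying the symbolic cancellation requires patience but no new idea.
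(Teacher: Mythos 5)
Your proposal follows exactly the paper's argument: separation of variables in $\eta$ using the $\mathbb{S}^7$ spherical harmonics $h_m$, the gauge transform $\phi_m=e^{-8mt}(\cos r)^m\psi_m$ followed by $x=\cos 2r$ to reach the Jacobi operator, and then fixing $\alpha_{k,m}$ from the initial condition via the Jacobi and $h_m$ orthogonality relations and $P_k^{3,m+3}(1)=\binom{k+3}{k}$. The only small addition you make explicit (correctly) is the fact $h_m(0)=1$, which the paper uses implicitly.
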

As an immediate corollary for the spectral expansion of the heat kernel, one obtains the spectrum of the sub-Laplacian.

\begin{coro}
	The spectrum of $-{L}$ is given by $\{4(k(k+m+7)+2m) : m,k\geq 0 \}$  and its first non zero eigenvalue is $8$.
\end{coro}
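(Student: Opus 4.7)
The plan is to read off the spectrum directly from the spectral decomposition established in Proposition \ref{prop:2}. There, the subelliptic heat kernel was expanded as
\[
p_t(r,\eta)=\sum_{m=0}^{\infty}\sum_{k=0}^{\infty}\alpha_{k,m} h_m(\eta)\, e^{-4(k(k+m+7)+2m)t}\cos^m r\, P^{3,m+3}_k(\cos 2r),
\]
so the functions $\Phi_{k,m}(r,\eta) := h_m(\eta)\cos^m r\, P^{3,m+3}_k(\cos 2r)$ are eigenfunctions of $\widetilde{L}$ with eigenvalues $-4(k(k+m+7)+2m)$. The computation carried out inside that proof already verified this: setting $\phi_m = e^{-8mt}(\cos r)^m\psi_m$ and then $\psi_m = g_m(t,\cos 2r)$ turns the radial equation into a Jacobi equation whose eigenfunctions are precisely the $P^{3,m+3}_k$.

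First I would note that, by the cylindric symmetry combined with the spectral theorem, the radial eigenfunctions of $-L$ acting on functions depending only on $(r,\eta)$ are exhausted by the $\Phi_{k,m}$: indeed, $\bigl((1+x)^{(m+3)/2}P^{3,m+3}_k(x)\bigr)_{k\ge 0}$ is a complete orthogonal basis of $L^2([-1,1],(1-x)^3 dx)$ and $(h_m)_{m\ge 0}$ is a complete orthonormal basis of $L^2([0,\pi],\sin^6\eta\, d\eta)$, as was already used to identify the normalization constants $\alpha_{k,m}$. Since every eigenfunction of $-L$ on $\mathbb{S}^{15}$ can, after averaging over the isometry group preserving the north pole, be realized as such a radial eigenfunction with the same eigenvalue, the spectrum of $-L$ is precisely
\[
\{\, 4(k(k+m+7)+2m) : m,k \ge 0 \,\}.
\]

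Finally I would identify the smallest nonzero eigenvalue by a short case analysis on $(m,k)$. The value is $0$ only when $(m,k)=(0,0)$. For $(m,k)\neq(0,0)$ the minimum is attained by inspecting the four smallest cases: $(m,k)=(1,0)$ gives $4\cdot 2 = 8$; $(m,k)=(0,1)$ gives $4\cdot 8 = 32$; $(m,k)=(2,0)$ gives $4\cdot 4 = 16$; and any other choice is strictly larger since both $k(k+m+7)$ and $2m$ are monotone in $k$ and $m$. Hence the first nonzero eigenvalue is $8$. No serious obstacle is expected: the corollary is essentially a bookkeeping consequence of Proposition \ref{prop:2}, the only mild subtlety being the argument that general (non-radial) eigenfunctions contribute no new eigenvalues, which follows from the two-point homogeneity of the fiber and base already invoked in the construction of cylindric coordinates.
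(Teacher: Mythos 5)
Your proposal is correct and follows the same route the paper intends: read the eigenvalues directly off the Minakshisundaram--Pleijel expansion of Proposition~\ref{prop:2}, use completeness of the $h_m$ and the Jacobi polynomials (already established when fixing the $\alpha_{k,m}$) together with two-point homogeneity to see that radial eigenfunctions exhaust the spectrum, and then find the minimum of $4(k(k+m+7)+2m)$ over $(m,k)\neq(0,0)$, which is $8$ at $(m,k)=(1,0)$. The paper simply labels this ``immediate''; your extra sentence justifying why non-radial eigenfunctions contribute no new eigenvalues is sound and a welcome bit of rigor, not a deviation.
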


\begin{rema}
	One can compare the spectrum of $L$ of the octonionic $15$-dimensional sphere with the spectrum of the sub-Laplacian of the $9$-dimensional CR-sphere 
	\begin{equation*}
	\mathbb{S}^1 \rightarrow \mathbb{S}^9 \rightarrow \mathbb{CP}^4,
	\end{equation*}
	and the spectrum of the sub-Laplacian of the $11$-dimensional quaternionic-sphere
\begin{equation*}
\mathbb{SU}^2 \rightarrow \mathbb{S}^{11} \rightarrow \mathbb{HP}^2,
\end{equation*}	
which are 
\begin{equation*}
\{4(k(k+m+4)+2m) : m,k\geq 0. \}
\end{equation*}
\begin{equation*}
\{4(k(k+m+5)+2m) : m,k\geq 0. \}
\end{equation*}
given in \cite{FBJW13} and \cite{FBL14} respectively.

\end{rema}

\begin{prop}\label{prop:3}
Let $p_t^{Q}$ and $p_t$ denote the subelliptic heat kernels on the $11$-dimensional quaternionic sphere $\mathbb{S}^{11}$ and the 15-dimensional octonionic sphere $\mathbb{S}^{15}$ respectively. Then for $r\in [0,\frac{\pi}{2}),\eta\in [0,\pi)$, 
\begin{equation}\label{eq:1}
p_t(r,\eta)=\frac{192e^{16t}}{{\pi}^2\cos^2 r}\left(\frac{1}{\sin^2 \eta}\frac{\partial^2}{\partial \eta^2}p_t^{Q}-\frac{\cos \eta}{\sin^3 \eta}\frac{\partial}{\partial \eta}p_t^{Q}\right).
\end{equation}
\begin{proof}
From the Rodrigues formula (for example, see the proposition 9.4.1 in \cite{JF08}), one can verify that 
\begin{equation}\label{eq:5}
\frac{1}{6}\left(\frac{1}{\sin^2 \eta}\frac{\partial^2}{\partial \eta^2}-\frac{\cos \eta}{\sin^3 \eta}\frac{\partial}{\partial \eta}\right)\frac{\sin(m+1)\eta}{\sin \eta}=\left(\begin{array}{c}
m+3 \\ m-2
\end{array}\right) h_{m-2}(\eta).
\end{equation}

On the other hand, from \cite{FBL14}, on the $11$-dimensional quaternionic sphere $\mathbb{S}^{11}$, the spectral decomposition of the quaternionic subelliptic heat kernel $p_t^{Q}(r,\eta)$ is known:
\begin{equation*}
p^{Q}_t(r,\eta)=\sum_{m=0}^{\infty}\sum_{k=0}^{\infty}\beta_{k,m} e^{-4(k(k+m+5)+2m)t}\frac{\sin (m+1)\eta}{\sin \eta}\cos^m r P^{3,m+1}_k(\cos 2r),
\end{equation*}
where 
\begin{equation*}
\beta_{k,m}=\frac{\Gamma(4)}{2\pi^{6}}(2k+m+5)(m+1)\left(\begin{array}{c}
k+m+4 \\ k+m+1
\end{array}\right).
\end{equation*}

Note that the octonionic subelliptic heat kernel $p_t(r,\eta)$ in the previous proposition which was given by:
\begin{equation*}
p_t(r,\eta)=\sum_{m=0}^{\infty}\sum_{k=0}^{\infty}\alpha_{k,m} h_m(\eta)e^{-4(k(k+m+7)+2m)t}\cos^m r P^{3,m+3}_k(\cos 2r),
\end{equation*}
where $\alpha_{k,m}=\frac{96}{\pi^{8}}(m+3)(2k+m+7)\left(\begin{array}{c}
k+m+6 \\ k+m+3
\end{array}\right)\left(\begin{array}{c}
m+5 \\ m
\end{array}\right)$.

From those two expressions of the heat kernels with \eqref{eq:5}, we can easily deduce that 
\begin{equation*}
p_t(r,\eta)=\frac{192e^{16t}}{{\pi}^2\cos^2 r}\left(\frac{1}{\sin^2 \eta}\frac{\partial^2}{\partial \eta^2}p_t^{Q}-\frac{\cos \eta}{\sin^3 \eta}\frac{\partial}{\partial \eta}p_t^{Q}\right).
\end{equation*}
\end{proof}

\end{prop}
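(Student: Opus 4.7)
My plan is to verify the identity by a term-by-term comparison of the Minakshisundaram--Pleijel spectral expansion of $p_t$ from Proposition \ref{prop:2} with the analogous expansion of $p_t^Q$ recalled from \cite{FBL14}. The key bridge between the two is a Rodrigues-type identity showing that the angular differential operator $\mathcal{D}:=\frac{1}{\sin^2\eta}\partial_\eta^2-\frac{\cos\eta}{\sin^3\eta}\partial_\eta$ sends the $\mathbb{S}^3$-zonal function $\sin((m+1)\eta)/\sin\eta$ to a scalar multiple of the $\mathbb{S}^7$-zonal spherical harmonic $h_{m-2}$, and this is what allows the quaternionic angular basis $\{\sin((m+1)\eta)/\sin\eta\}$ to be converted into the octonionic one $\{h_m\}$.

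First I would establish this Rodrigues-type identity. The cleanest approach is to note the factorisation $\mathcal{D}=\frac{1}{\sin\eta}\partial_\eta\!\left(\frac{1}{\sin\eta}\partial_\eta\right)$, so that on functions of $\cos\eta$ it acts as $d^2/dx^2$ in the variable $x=\cos\eta$. Writing the two zonal bases in terms of Gegenbauer polynomials, namely $\sin((m+1)\eta)/\sin\eta=C_m^1(\cos\eta)$ and $h_m(\eta)=C_m^3(\cos\eta)/C_m^3(1)$, the classical raising identity $\tfrac{d}{dx}C_m^\lambda=2\lambda\,C_{m-1}^{\lambda+1}$ applied twice expresses $\tfrac{d^2}{dx^2}C_m^1$ as an explicit multiple of $C_{m-2}^3$, which upon normalization yields the desired identity with a combinatorial constant proportional to $\binom{m+3}{m-2}$.

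Next, I would apply $\mathcal{D}$ termwise to the spectral series for $p_t^Q$. The operator annihilates the $m=0$ and $m=1$ contributions and converts each remaining angular factor into a multiple of $h_{m-2}(\eta)$. Reindexing via $m\mapsto m+2$ rewrites the result as a double sum over $m,k\geq 0$, in which three compensating adjustments occur: the eigenvalue exponent acquires an extra $-16t$ exactly cancelled by the prefactor $e^{16t}$; the radial factor $\cos^m r$ becomes $\cos^{m+2}r$, whose extra $\cos^2 r$ is cancelled by the $1/\cos^2 r$ prefactor; and the Jacobi index shifts from $P_k^{3,m+1}$ to $P_k^{3,m+3}$, precisely matching the exponent appearing in the expansion of $p_t$. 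It then remains to check the scalar identity obtained by matching $\alpha_{k,m}$ with the product of $192/\pi^2$, the Rodrigues constant, and the shifted quaternionic coefficient $\beta_{k,m+2}\binom{m+5}{m}$; this reduces to a short algebraic manipulation with the explicit formulas.

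The main obstacle is the Rodrigues-type identity itself, as it is the only substantive analytic input; everything afterwards is index bookkeeping together with a binomial identity, provided the termwise structure is organised correctly. An alternative to the Gegenbauer route, if one prefers to avoid invoking that classical machinery, is to differentiate the integral representation of $h_{m-2}$ in Proposition 9.4.4 of \cite{JF08} directly and verify the identity by comparing integrands against the Chebyshev expansion of $\sin((m+1)\eta)/\sin\eta$; this is more laborious but completely elementary.
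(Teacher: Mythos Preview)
Your proposal is correct and follows essentially the same route as the paper: both proofs apply the angular operator $\mathcal{D}$ termwise to the spectral expansion of $p_t^{Q}$, invoke the Rodrigues-type identity sending $\sin((m+1)\eta)/\sin\eta$ to a multiple of $h_{m-2}$, reindex $m\mapsto m+2$, and match coefficients. Your derivation of the key identity via the factorisation $\mathcal{D}=\tfrac{1}{\sin\eta}\partial_\eta\bigl(\tfrac{1}{\sin\eta}\partial_\eta\bigr)$ together with the Gegenbauer raising relation $\tfrac{d}{dx}C_m^{\lambda}=2\lambda\,C_{m-1}^{\lambda+1}$ is a clean justification of what the paper simply attributes to the Rodrigues formula.
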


\section{Integral representation of the subelliptic heat kernel}

Since $\widetilde{L}=\widetilde{\triangle_{S^{15}}}-\widetilde{\triangle_{S^7}}$, and $\widetilde{L}$ commutes with $\widetilde{\triangle_{S^7}}$ we formally have
\begin{equation*}
e^{t\widetilde{L}}=e^{-t\widetilde{\triangle_{S^7}}}e^{t\widetilde{\triangle_{S^{15}}}}.
\end{equation*}
If we denote by $q_t$ the heat kernel of the heat semigroup $e^{t\widetilde{\triangle_{S^{15}}}}$, then the subelliptic heat kernel $p_t(r,\eta)$ can be obtained by applying the heat semigroup $e^{-t\widetilde{\triangle_{S^7}}}$ on $q_t$, i.e.
\begin{equation*}
p_t(r,\eta)=(e^{-t\widetilde{\triangle_{S^7}}} q_t)(r,\eta).
\end{equation*}
Thus once one knows an integral expression of the heat semigroup $e^{-t\widetilde{\triangle_{S^7}}}$, then one can deduce the integral representation of $p_t(r,\eta)$. Now we have the proposition~\ref{prop:3}, thus we can deduce the integral representation of subelliptic heat kernel $p_t(r,\eta)$ on $\mathbb{S}^{15}$ from the integral representation of the quaternionic subelliptic heat kernel $p_t^{Q}(r,\eta)$ on $\mathbb{S}^{11}$. We now make those heuristic considerations precise.

 Let $q_t$ be the Riemannian radial heat kernel on $\mathbb{S}^{11}$. For later use, we record here that:

(1) The spectral decomposition of $q_t$ is given by 
\begin{equation*}
q_t(\cos \delta)=\frac{\Gamma(5)}{2{\pi}^{6}}\sum_{m=0}^{\infty}(m+5)e^{-m(m+10)t}C^{5}_m(\cos \delta),
\end{equation*}
where $\delta$ is the Riemannian distance from the north pole and 
\begin{equation*}
C^{5}_m(x)=\frac{(-1)^m}{2^m}\frac{\Gamma(m+10)\Gamma(\frac{15}{2})}{\Gamma(10)\Gamma(m+1)\Gamma(m+\frac{11}{2})}\frac{1}{(1-x^2)^{\frac{5}{2}}}\frac{d^m}{d x^m}(1-x^2)^{m+9/2}
\end{equation*}
is a Gegenbauer polynomial. 

(2) Another well known expression (see (3.7) in \cite{FBJW13}) of $q_t(\cos \delta)$ which is useful for the computation of small-time asymptotics is the formula
\begin{equation}\label{heat kernel s11}
q_t(\cos \delta)=e^{25 t}\left(-\frac{1}{2\pi \sin \delta }\frac{\partial}{\partial \delta}\right)^{5}V,
\end{equation}
where $V(t,\delta)=\frac{1}{\sqrt{4\pi t}}\sum_{k\in \mathbb{Z}}e^{-\frac{(\delta-2k\pi)^2}{4t}}$. 

Formula \eqref{heat kernel s11} also  shows that $\delta \to q_t(\cos \delta)$ can analytically be extended to $\delta \in \mathbb C$.

\begin{prop}\label{prop:4}
For $r\in [0,\frac{\pi}{2}),\eta\in [0,\pi)$, we have:
\begin{equation}\label{eq:2}
p_t(r,\eta)=\frac{48e^{15t}}{{\pi}^2\sqrt{\pi t}\cos^2 r}\int_{0}^{\infty}\frac{g_t(\eta,y)}{t^2} e^{{-}\frac{y^2+{\eta}^2}{4t}}q_t(\cos r \cosh y)\sinh y dy,
\end{equation}
where 
\begin{align*}
g_t(\eta,y)&=\csc ^3(\eta)\left( \cos \left(\frac{\eta y}{2 t}\right)2 y (\eta-3 t \cot \eta)\right.\\
&+\left.\sin \left(\frac{\eta y}{2 t}\right) \left(8 t^2 \cot ^2 \eta+4 t^2 \csc ^2 \eta-6 t \eta \cot \eta+2 t+\eta^2-y^2\right)\right).
\end{align*}
\begin{proof}
Note that because of the rapid decay of the integrand and thanks to formula \eqref{heat kernel s11}, we can differentiate under the integral sign. Now from the Proposition~\ref{prop:3}, we know 	
\begin{equation*}
p_t(r,\eta)=\frac{192e^{16t}}{{\pi}^2\cos^2 r}\left(\frac{1}{\sin^2 \eta}\frac{\partial^2}{\partial \eta^2}p_t^{Q}-\frac{\cos \eta}{\sin^3 \eta}\frac{\partial}{\partial \eta}p_t^{Q}\right).
\end{equation*}
From the proposition 2.7 in \cite{FBJW14}, we have
\begin{equation*}
p^{Q}_t(r,\eta)=\frac{e^{-t}}{\sqrt{\pi t}}\int_{0}^{\infty}\frac{\sinh y \sin{\frac{\eta y}{2t}}}{\sin \eta} e^{{-}\frac{y^2+{\eta}^2}{4t}}q_t(\cos r \cosh (y)dy,
\end{equation*}
Plugging in those two ingredients gives the desired result. 
\end{proof}		
		
\end{prop}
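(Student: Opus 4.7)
The plan is to combine Proposition~\ref{prop:3} with the known integral representation of the quaternionic subelliptic heat kernel $p_t^Q$ on $\mathbb{S}^{11}$ from \cite{FBJW14}, and then compute the action of the differential operator in $\eta$ on the integrand explicitly.

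First I would rewrite the operator appearing in Proposition~\ref{prop:3} in factored form. A short computation using $\partial_\eta(1/\sin\eta)=-\cos\eta/\sin^2\eta$ shows that
\begin{equation*}
\frac{1}{\sin^2\eta}\frac{\partial^2}{\partial\eta^2}-\frac{\cos\eta}{\sin^3\eta}\frac{\partial}{\partial\eta}=\frac{1}{\sin\eta}\frac{\partial}{\partial\eta}\circ\frac{1}{\sin\eta}\frac{\partial}{\partial\eta},
\end{equation*}
so Proposition~\ref{prop:3} gives
\begin{equation*}
p_t(r,\eta)=\frac{192\,e^{16t}}{\pi^2\cos^2 r}\,\left(\frac{1}{\sin\eta}\partial_\eta\right)^2 p_t^Q(r,\eta).
\end{equation*}

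Next I would insert the integral formula (Proposition 2.7 of \cite{FBJW14})
\begin{equation*}
p_t^Q(r,\eta)=\frac{e^{-t}}{\sqrt{\pi t}}\int_0^\infty \sinh y\,\frac{\sin(\eta y/2t)}{\sin\eta}\,e^{-(y^2+\eta^2)/4t}\,q_t(\cos r\cosh y)\,dy,
\end{equation*}
where $q_t$ is the radial Riemannian heat kernel on $\mathbb{S}^{11}$. Because only the factor $F(\eta,y):=\frac{\sin(\eta y/2t)}{\sin\eta}e^{-\eta^2/4t}$ depends on $\eta$, and because the representation \eqref{heat kernel s11} for $q_t$ together with the Gaussian factor ensures rapid decay in $y$, I can differentiate twice under the integral. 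Thus the task reduces to computing
\begin{equation*}
\Bigl(\tfrac{1}{\sin\eta}\partial_\eta\Bigr)^2 F(\eta,y).
\end{equation*}

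The main step (and the only real work) is this calculation. I would carry it out by first computing $\frac{1}{\sin\eta}\partial_\eta F$, which produces three terms from differentiating $\sin(\eta y/2t)$, $1/\sin\eta$, and $e^{-\eta^2/4t}$; applying $\frac{1}{\sin\eta}\partial_\eta$ once more produces a sum of roughly a dozen terms, each of the form (trigonometric expression in $\eta$)$\times$(polynomial in $y,\eta,t$)$\times\{\cos(\eta y/2t)\text{ or }\sin(\eta y/2t)\}\times e^{-\eta^2/4t}$. Grouping according to whether $\cos(\eta y/2t)$ or $\sin(\eta y/2t)$ appears, and using $1=\sin^2\eta+\cos^2\eta$ to reorganize powers of $\cot\eta$ and $\csc\eta$, I would verify that the result is
\begin{equation*}
\Bigl(\tfrac{1}{\sin\eta}\partial_\eta\Bigr)^2 F(\eta,y)=\frac{1}{4t^2}\,g_t(\eta,y)\,e^{-\eta^2/4t},
\end{equation*}
with $g_t(\eta,y)$ exactly as in the statement. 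Substituting back and checking prefactors, $\frac{192\,e^{16t}}{\pi^2\cos^2 r}\cdot\frac{e^{-t}}{\sqrt{\pi t}}\cdot\frac{1}{4}=\frac{48\,e^{15t}}{\pi^2\sqrt{\pi t}\cos^2 r}$, which produces the announced formula \eqref{eq:2}. The principal obstacle is purely bookkeeping: no cancellation is subtle, but one must track the many terms produced by the two applications of $\frac{1}{\sin\eta}\partial_\eta$ to a product of three $\eta$-dependent factors without error.
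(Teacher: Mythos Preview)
Your proof is correct and follows essentially the same approach as the paper: invoke Proposition~\ref{prop:3}, insert the integral representation of $p_t^Q$ from \cite{FBJW14}, justify differentiation under the integral via the rapid decay coming from \eqref{heat kernel s11}, and carry out the $\eta$-differentiation. Your additional observation that the operator factors as $\bigl(\tfrac{1}{\sin\eta}\partial_\eta\bigr)^2$ is a nice bookkeeping device that the paper does not make explicit, but otherwise the arguments coincide.
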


\section{The Green function of the operator $-L+40$}

An interesting consequence of the proposition~\ref{prop:3} is the exact computation of the Green function of the operator $-L+40$.
\begin{prop}
For $r\in [0,\frac{\pi}{2}),\eta\in [0,\pi)$, the Green function of the operator $-L+40$ is given by:
\begin{equation*}
G(r,\eta)=\frac{1}{\pi^{8}}\frac{2304}{\left(-2 \cos r \cos \eta+\cos ^2 r+1\right)^5}.
\end{equation*}

\begin{proof}
From the Theorem 2.9 in \cite{FBJW14}, in the case of $\mathbb{S}^{11}$ one has
\begin{equation*}
\int_{0}^{\infty}p^{Q}_{t}(r,\eta)e^{-24t}dt=\frac{1}{4\pi^{6}(1-2\cos r \cos \eta+\cos^2 r)^3}.
\end{equation*}
Using \eqref{eq:1}, tedious computations then yield
\begin{equation*}
\int_{0}^{\infty}p_{t}(r,\eta)e^{-40t}dt=\frac{1}{\pi^{8}}\frac{2304}{\left(-2 \cos r \cos \eta+\cos ^2 r+1\right)^5}.
\end{equation*}
\end{proof}
\end{prop}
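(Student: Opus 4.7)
The plan is to realize the Green function as the Laplace-type transform
\begin{equation*}
G(r,\eta)=\int_{0}^{\infty}p_{t}(r,\eta)\,e^{-40t}\,dt,
\end{equation*}
and then reduce the octonionic problem to the quaternionic one via Proposition~\ref{prop:3}. Specifically, substituting the identity
\begin{equation*}
p_t(r,\eta)=\frac{192\,e^{16t}}{\pi^{2}\cos^{2} r}\left(\frac{1}{\sin^{2}\eta}\frac{\partial^{2}}{\partial\eta^{2}}p_t^{Q}-\frac{\cos\eta}{\sin^{3}\eta}\frac{\partial}{\partial\eta}p_t^{Q}\right)
\end{equation*}
into the integral, the factor $e^{16t}\,e^{-40t}=e^{-24t}$ is exactly what is needed to match the spectral shift of the quaternionic Green function recalled from Theorem 2.9 of \cite{FBJW14}.

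Next, I would interchange the $t$-integral and the $\eta$-derivatives. This is legitimate because the spectral expansion of $p_t^Q$ in Proposition~\ref{prop:3} converges absolutely together with its $\eta$-derivatives on compact subsets of $[0,\pi/2)\times(0,\pi)$, and the weight $e^{-24t}$ controls the $t\to\infty$ behavior while the small-$t$ singularity is integrable away from the diagonal. After the swap, the problem reduces to applying the angular differential operator
\begin{equation*}
\mathcal{A}:=\frac{1}{\sin^{2}\eta}\frac{\partial^{2}}{\partial\eta^{2}}-\frac{\cos\eta}{\sin^{3}\eta}\frac{\partial}{\partial\eta}
\end{equation*}
to the explicit quaternionic Green function $\frac{1}{4\pi^{6}(1-2\cos r\cos\eta+\cos^{2} r)^{3}}$.

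The main computational step, which I would carry out carefully but which is essentially bookkeeping, is the evaluation of $\mathcal{A}f$ with $f(\eta)=(a-b\cos\eta)^{-3}$, where $a=1+\cos^{2} r$ and $b=2\cos r$. Writing $\mathcal{A}f=\sin^{-3}\eta\,(\sin\eta\, f''-\cos\eta\, f')$, a direct computation shows that the term proportional to $(a-b\cos\eta)^{-4}$ cancels, leaving
\begin{equation*}
\mathcal{A}f=\frac{12\,b^{2}}{(a-b\cos\eta)^{5}}=\frac{48\cos^{2} r}{(1+\cos^{2} r-2\cos r\cos\eta)^{5}}.
\end{equation*}
Multiplying by the prefactors $\tfrac{192}{\pi^{2}\cos^{2} r}\cdot\tfrac{1}{4\pi^{6}}$ produces the advertised constant $2304/\pi^{8}$ and the factor $\cos^{2} r$ cancels, which is the reassuring algebraic check that the formula is correct.

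The subtlest point, and the one on which I would spend the most care, is the justification of differentiating under the integral and the verification that the resulting explicit function is indeed the Green function in the distributional sense (and not merely that the Laplace transform of the heat kernel evaluates to it at smooth points). For the former, the spectral expansions of both $p_t$ and $p_t^Q$ give uniform bounds sufficient for Fubini and for passing $\eta$-derivatives through the integral. For the latter, one can either invoke general hypoelliptic semigroup theory — $G$ is automatically the convolution kernel of $(-L+40)^{-1}$ against the Riemannian measure $d\overline{\mu}$ provided the integral converges — or verify $(-\widetilde{L}+40)G=0$ off the diagonal by a direct computation on the explicit expression, which is routine. The singularity structure at $r=\eta=0$ of order $(1+\cos^{2} r-2\cos r\cos\eta)^{-5}$ is the expected one for a sub-Riemannian Green function in homogeneous dimension $16$ and matches the known blow-up rate inherited from the quaternionic case.
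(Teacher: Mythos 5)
Your proof is correct and follows exactly the same route as the paper: substitute the relation from Proposition~\ref{prop:3} into the Laplace transform $\int_0^\infty p_t e^{-40t}\,dt$, observe that $e^{16t}e^{-40t}=e^{-24t}$ matches the quaternionic Green function from Theorem~2.9 of \cite{FBJW14}, and apply the angular operator. You usefully carry out the differentiation that the paper leaves as ``tedious computations'' and verify the cancellation of the $(a-b\cos\eta)^{-4}$ term explicitly, which checks out, yielding $12b^2(a-b\cos\eta)^{-5}=48\cos^2 r\,(1+\cos^2 r-2\cos r\cos\eta)^{-5}$ and hence the stated constant $2304/\pi^8$.
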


\begin{remark}
A similar computation for the complex Hopf fibration on the sphere $\mathbb S^{2n+1}$  yields the Green function of the operator $-L+n^2$, see Proposition 3.4 in \cite{FBJW13}. The operator $-L+n^2$ is the conformal sub-Laplacian on $\mathbb S^{2n+1}$, see  \cite{Geller} and also Theorem 2.1 in \cite{MR3737629} for its relation to the sub-Laplacian of the Heisenberg group. In our case of the octonionic fibration, it would be interesting to interpret $-L+40$ as a "conformal octonionic" sub-Laplacian. In particular, study its relation to the sub-Laplacian on the octonionic Heisenberg group and interpret the number 40. We let this for possible further research.
\end{remark}

\section{Heat kernel small-time asymptotics}

Another advantage of the integral representation \eqref{eq:2} is to deduce the small time asymptotics of the subelliptic heat kernel $p_t$. 
The keypoint is to use the representation \eqref{eq:2} and the exact formula \eqref{heat kernel s11} for $q_t$ which provide uniform estimates for the remainder terms at any order.

The methods to obtain the exact asymptotics are similar to the methods used in \cite{MR2545862, FBJW13,FBJW14}, so we will omit some of the technical justifications. Those methods, in particular the use of steepest descent method, originally go back to \cite{MR1776501} who thoroughly studied the case of the subelliptic heat kernel on the Heisenberg group. Justifications are also written in great details for the sub-Laplacian of the complex Hopf fibration in section 6 of \cite{MR3130140}.

We first note the following uniform small time asymptotics for the Riemannian heat kernel on $\mathbb{S}^{11}$ follows from \eqref{heat kernel s11}
\begin{equation}\label{eq:3a}
q_t(\cos \delta)=\frac{1}{(4\pi t)^{11/2}}\left(\frac{\delta}{\sin \delta}\right)^{5}e^{-\frac{\delta^2}{4t}}\left(1+\left(25-\frac{20(\sin \delta - \delta \cos \delta)}{\delta^2 \sin \delta}\right)t+t^2 R_1(t,\delta) \right).
\end{equation}
The term $R_1(t,\delta) $ is uniformly bounded when $t \to 0$ on any interval $\delta \in [0, \pi-\varepsilon]$, $0<\varepsilon<\pi$. We also deduce from \eqref{heat kernel s11} the uniform small time asymptotics
\begin{equation}\label{eq:3}
q_t(\cosh \delta)=\frac{1}{(4\pi t)^{11/2}}\left(\frac{\delta}{\sinh \delta}\right)^{5}e^{\frac{\delta^2}{4t}}\left(1+\left(25+\frac{20(\sinh \delta - \delta \cosh \delta)}{\delta^2 \sinh \delta}\right)t+t^2 R_2(t,\delta) \right).
\end{equation}

The term $R_2(t,\delta) $ is uniformly bounded when $t \to 0$ on $[0,+\infty)$.

By applying \eqref{eq:2}, one can then deduce the small time asymptotics of the subelliptic heat kernel. 

\begin{prop}
	When $t \rightarrow 0$,
\begin{equation*}
p_t(0,0)=\frac{1}{5\times2^{10}\pi^{8}t^{11}}(A+Bt+O(t^2)),
\end{equation*}	
where 
\begin{align*}
A&=\int_{0}^{\infty}y^5\left(\frac{y}{\sinh y}\right)^5dy,\\
B&=20\int_{0}^{\infty}\left(y^5\left(2+\frac{\sinh y-y\cosh y}{y^2 \sinh y} \right)-y^3 \right)\left(\frac{y}{\sinh y}\right)^5dy.
\end{align*}
\end{prop}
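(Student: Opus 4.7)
The plan is to specialize the integral representation \eqref{eq:2} to $r=\eta=0$ and carry out a term-by-term small-$t$ expansion of the resulting one-dimensional integral. Since $\cos r|_{r=0}=1$ and $e^{-\eta^2/(4t)}|_{\eta=0}=1$, the only delicate point is the $\csc^3\eta$ factor in $g_t(\eta,y)$. I would first verify that the bracketed expression defining $g_t(\eta,y)$ vanishes to order $\eta^3$ at $\eta=0$, so that the $\csc^3\eta$ singularity is cancelled, and then extract the limit. Expanding $\cot\eta$, $\csc\eta$, $\cos(\eta y/(2t))$ and $\sin(\eta y/(2t))$ in Taylor series at $\eta=0$ shows that the $\eta^{-1}$ and $\eta^1$ coefficients cancel identically between the two summands of $g_t$; the coefficient of $\eta^3$ then yields $\lim_{\eta\to 0}g_t(\eta,y)$ as an explicit polynomial in $y$ whose coefficients are Laurent polynomials in $t$ with leading singular term of order $t^{-3}$.

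The second and crucial ingredient is a Gaussian cancellation: when the uniform asymptotic \eqref{eq:3} is substituted for $q_t(\cosh y)$ into \eqref{eq:2} at $r=0$, the factor $e^{y^2/(4t)}$ produced by \eqref{eq:3} exactly annihilates the factor $e^{-y^2/(4t)}$ already present in the integrand. What remains is an integral of the form $\int_0^\infty \varphi(t,y)(y/\sinh y)^5\sinh y\, dy$ in which $\varphi(t,y)$ is a Laurent polynomial in $t$ with polynomial coefficients in $y$. Consequently, no stationary-phase or steepest-descent argument is required, and the small-$t$ expansion can be read off term by term.

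With the limit of $g_t$ and the expansion \eqref{eq:3} in hand, I would multiply through by the prefactor $48e^{15t}/(\pi^2\sqrt{\pi t})$, collect powers of $t$, and identify the leading $t^{-11}$ coefficient with $A$ and the $t^{-10}$ coefficient with $B$. The $t^{-11}$ term comes exclusively from pairing the most singular $t^{-3}$ part of $\lim_{\eta\to 0}g_t(\eta,y)$ with the constant $1$ in the bracket of \eqref{eq:3} and with $e^{15t}|_{t=0}=1$; a short arithmetic simplification of the numerical prefactors produces the factor $1/(5\cdot 2^{10})$ together with the integral $A$. The $t^{-10}$ coefficient receives three contributions which must be summed: the subleading $1/t$ powers of $\lim_{\eta\to 0}g_t(\eta,y)$, the $O(t)$ correction $c(y)t$ of \eqref{eq:3} with $c(y)=25+20(\sinh y-y\cosh y)/(y^2\sinh y)$, and the linear term of $e^{15t}$. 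A direct computation shows that these three contributions combine into the stated expression for $B$; in particular the grouping $20(2+(\sinh y - y\cosh y)/(y^2\sinh y))$ in $B$ reflects the sum of the $15$ from $e^{15t}$ and the $25$ from $c(y)$.

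The main obstacle is not conceptual but bookkeeping: several $O(t)$ corrections must be carefully collected and simplified to the compact form of $B$. Uniformity of the $O(t^2)$ remainder is automatic: by \eqref{eq:3} the remainder $R_2(t,y)$ is uniformly bounded on $[0,\infty)$ as $t\to 0$, and the factor $(y/\sinh y)^5\sinh y$ provides super-polynomial decay in $y$, which makes every integral absolutely convergent and legitimizes the interchange of $\lim_{\eta\to 0}$ with the $y$-integration by dominated convergence.
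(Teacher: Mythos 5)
Your approach matches the paper's: specialize the integral representation \eqref{eq:2} at $r=\eta=0$, resolve the apparent $\csc^3\eta$ singularity of $g_t(\eta,y)$ by a Taylor expansion at $\eta=0$ to obtain $g_t(0,y)/t^2$, substitute the uniform small-time expansion \eqref{eq:3} of $q_t(\cosh y)$ so that the Gaussian factors cancel, and then collect the $t^{-11}$ and $t^{-10}$ coefficients; your accounting of where each contribution to $B$ comes from (the $15$ from $e^{15t}$, the $25$ and the $(\sinh y-y\cosh y)/(y^2\sinh y)$ from the correction in \eqref{eq:3}, and the $-y^3$ from the subleading power of $g_t(0,y)/t^2$) is exactly the paper's computation spelled out. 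One bookkeeping point to settle before asserting the displayed forms of $A$ and $B$: you correctly retain the factor $\sinh y$ from the integrand of \eqref{eq:2}, so the residual integral should be against $\bigl(\tfrac{y}{\sinh y}\bigr)^5\sinh y\,dy$, whereas the stated $A$ and $B$ are against $\bigl(\tfrac{y}{\sinh y}\bigr)^5 dy$ with no extra $\sinh y$ --- the same factor has silently dropped out between \eqref{eq:2} and the intermediate display \eqref{eq:4} --- and reconciling these is needed to confirm the final constants.
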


\begin{proof}
From \eqref{eq:2}, we have 
\begin{equation*}
p_t(0,0)=\frac{48e^{15t}}{{\pi}^2\sqrt{\pi t}}\int_{0}^{\infty}\frac{g_t(0,y)}{t^2} e^{{-}\frac{y^2}{4t}}q_t( \cosh y)\sinh ydy,
\end{equation*}	
where $\frac{g_t(0,y)}{t^2}$ is given by 
\begin{equation}\label{eq:g_t}
\frac{y \left(64 t^4+120 t^3-20 t^2 \left(y^2-3\right)-20 t y^2+y^4\right)}{120 t^5}.
\end{equation}
Plug in \eqref{eq:3}, we have that 
\begin{equation}\label{eq:4}
p_t(0,0)=\frac{3e^{15t}}{2^{7}\pi^{8}t^6}\int_{0}^{\infty}\frac{g_t(0,y)}{t^2} \left(\frac{y}{\sinh y}\right)^5 \left(1+\left(25+\frac{20(\sinh y-y\cosh y)}{y^2 \sinh y} \right)t \right)dy+O(t^{-9}).
\end{equation}
Thus we obtain the small time asymptotic as follows:
\begin{equation*}
p_t(0,0)=\frac{1}{5\times2^{10}\pi^{8}t^{11}}(A+Bt+O(t^2)),
\end{equation*}	
where $A,B$ are constants as stated in the proposition. 
\end{proof}

The small time behavior of the subelliptic heat kernel on the vertical cut-locus, namely the points $(0,\eta)$ that can be achieved by flowing along vertical vector fields is quite different. We can deduce it is by differentiating the small time estimate of $p^{Q}_t(0,\eta)$.

\begin{prop}
For $\eta \in (0,\pi)$, $t \rightarrow 0$, 
\begin{equation*}
p_t(0,\eta)=\frac{1}{\pi^3 2^{12} t^{11}}e^{-\frac{\eta (-\eta+2 \pi )}{4 t}}\csc ^5 \eta \left(\eta^3 \left(-\eta^3+3 \pi  \eta^2-3 \pi ^2 \eta+\pi ^3)(1-\cos 2 \eta \right)+O(t)  \right) .
\end{equation*}


\end{prop}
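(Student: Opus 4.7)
The strategy is to specialise Proposition~\ref{prop:3} at $r=0$,
\begin{equation*}
p_t(0,\eta) = \frac{192\,e^{16t}}{\pi^{2}}\left(\frac{1}{\sin^{2}\eta}\,\partial_{\eta}^{2} p_t^{Q}(0,\eta) - \frac{\cos\eta}{\sin^{3}\eta}\,\partial_{\eta} p_t^{Q}(0,\eta)\right),
\end{equation*}
and then to propagate a small-time expansion of the quaternionic vertical heat kernel $p_t^{Q}(0,\eta)$ through the differential operator on the right-hand side, exactly as the paragraph preceding the statement suggests.

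The first ingredient is an expansion of the form
\begin{equation*}
p_t^{Q}(0,\eta) = \frac{A(\eta)}{t^{9}}\,e^{-\eta(2\pi-\eta)/4t}\,(1+O(t)),\qquad t\to 0,\ \eta\in(0,\pi),
\end{equation*}
with amplitude $A(\eta)$ proportional to $\eta^{3}(\pi-\eta)/\sin\eta$. This is obtained, following the approach of \cite{MR2545862,FBJW13,FBJW14}, by plugging the uniform expansion \eqref{heat kernel s11} of $q_{t}$ into Proposition~2.7 of \cite{FBJW14} at $r=0$ and evaluating the resulting oscillatory integral $\int_{0}^{\infty}(y^{5}/\sinh^{4}y)\sin(\eta y/2t)\,dy$ by shifting the contour into the upper half plane: the leading contribution comes from the residue at $y=i\pi$, while the higher poles at $y=ik\pi$, $k\ge 2$, and the $O(t)$ tails in \eqref{eq:3} produce only corrections that can be absorbed in the stated $O(t)$. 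I would simply cite \cite{FBJW14} for this step rather than rederive it.

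The second step is the differentiation. Writing $\phi(\eta)=\eta(2\pi-\eta)/4$ so that $\phi'(\eta)=(\pi-\eta)/2$, a derivative falling on the exponential factor produces $-\phi'(\eta)/t$, whereas a derivative falling on $A(\eta)$ or on the power $t^{-9}$ contributes only $O(1)$. To leading order,
\begin{equation*}
\partial_{\eta}^{2} p_t^{Q}(0,\eta) = \frac{(\pi-\eta)^{2}}{4t^{2}}\,p_t^{Q}(0,\eta)\,(1+O(t)),\qquad \partial_{\eta} p_t^{Q}(0,\eta) = O\!\left(t^{-1}\right)\,p_t^{Q}(0,\eta),
\end{equation*}
so the $\cos\eta\sin^{-3}\eta\,\partial_{\eta}$ term is subdominant by one factor of $t$. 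Inserting, the $\sin^{-2}\eta$ prefactor together with the $1/\sin\eta$ hidden in $A(\eta)$ produces $\csc^{3}\eta$; using $1-\cos 2\eta = 2\sin^{2}\eta$ rewrites this as $\csc^{5}\eta\,(1-\cos 2\eta)/2$, and the two factors of $\pi-\eta$ from $(\phi'(\eta))^{2}$ combine with the $(\pi-\eta)$ in $A(\eta)$ to yield $(\pi-\eta)^{3}=-\eta^{3}+3\pi\eta^{2}-3\pi^{2}\eta+\pi^{3}$. The overall constant $1/(\pi^{3}2^{12})$ then reduces to an arithmetic check once the explicit value of $A(\eta)$ from \cite{FBJW14} is substituted.

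The main technical obstacle is uniformity of the remainder after differentiation: because $\partial_{\eta}^{2}$ introduces $t^{-2}$, the expansion of $p_t^{Q}(0,\eta)$ has to be carried one order past the leading term so that its $O(t)$ correction survives differentiation and still reads as $O(t)$ in the final formula. The required uniform bounds on $\eta\in[\varepsilon,\pi-\varepsilon]$ follow from the uniform control of $R_{2}$ in \eqref{eq:3} together with absolute convergence in the contour representation, and are exactly those already justified in Section~6 of \cite{MR3130140} and in \cite{MR1776501,FBJW14}; the written proof can invoke those estimates and focus on the algebraic identification of the leading coefficient.
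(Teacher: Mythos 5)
Your proposal follows essentially the same route as the paper: specialise Proposition~\ref{prop:3} at $r=0$, substitute the small-time expansion of $p_t^{Q}(0,\eta)$ from Proposition 2.11 of \cite{FBJW14}, and propagate it through $\sin^{-2}\eta\,\partial_{\eta}^{2}-\cos\eta\,\sin^{-3}\eta\,\partial_{\eta}$, with the dominant contribution coming from both derivatives falling on the Gaussian exponent $\eta(2\pi-\eta)/4t$. Your algebraic identifications (the product $(\pi-\eta)^{2}\cdot(\pi-\eta)=(\pi-\eta)^{3}$ and $\csc^{3}\eta=\tfrac12\csc^{5}\eta\,(1-\cos 2\eta)$) and your discussion of how to justify term-by-term differentiation via the exact integral formula for $p_t^{Q}$ are correct and in fact more explicit than the paper's terse ``then the computation gives the result.''
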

\begin{proof}
From \eqref{eq:1}, 
\begin{equation*}
p_t(r,\eta)=\frac{192e^{16t}}{{\pi}^2\cos^2 r} \left(\frac{1}{\sin^2 \eta}\frac{\partial^2}{\partial \eta^2}p_t^{Q}-\frac{\cos \eta}{\sin^3 \eta}\frac{\partial}{\partial \eta}p_t^{Q}\right).
\end{equation*}	
From  proposition 2.11 in \cite{FBJW14} one has

\begin{equation*}
p^{Q}_t(0,\eta)=\frac{1}{3\pi 2^{15} t^{9}\sin \eta }(\pi-\eta)\eta^{3}e^{-\frac{2\pi \eta-\eta^2}{4t}} (1+O(t)),
\end{equation*}	
One can justify the differentiation of this asymptotics using the exact analytic formula for $p^{Q}_t$ which is given in proposition 2.7 in \cite{FBJW14} . Then the computation gives the result. 

\end{proof}

For last two propositions, we will apply the Laplace method and the steepest descent method. 

\begin{prop}\label{prop:8}
For $r\in (0,\frac{\pi}{2})$, we have 
\begin{equation*}
p_t(r,0)\sim_{t\rightarrow 0}\frac{3}{2^7}\frac{1}{( \pi t)^{15/2}\cos^2 r}e^{-\frac{r^2}{4t}}\left(\frac{r}{\sin r}\right)^5 \left(\frac{1}{1-r\cot r}\right)^{7/2}.
\end{equation*}
\begin{proof}
By \eqref{eq:2}, we have 
\begin{equation*}
p_t(r,0)=\frac{48e^{15t}}{{\pi}^2\sqrt{\pi t}\cos^2 r}\int_{-\infty}^{\infty}\frac{g_t(0,y)}{t^2} e^{{-}\frac{y^2}{4t}}q_t(\cos r \cosh y) \sinh ydy,
\end{equation*}
where $\frac{g_t(0,y)}{t^2}$ is given by \eqref{eq:g_t} and $q_t$ is the Riemannian heat kernel on $\mathbb{S}^{11}$.
By plugging in \eqref{eq:3}, we obtain that 
\begin{equation*}
p_t(r,0)\sim_{t\rightarrow 0}\frac{45}{32\pi^8 \cos^2 r}\frac{1}{t^{11}}(J_1(t)+J_2(t)),
\end{equation*}
where 
\begin{equation*}
J_1(t)=\int_{\cosh y \leq \frac{1}{\cos r}}e^{-{\frac{y^2+(\arccos(\cos r \cosh y))^2}{4t}}}\frac{g_t(0,y)}{t^2} \left(\frac{\arccos(\cos r \cosh y) }{\sqrt{1-\cos^2 r \cosh^2 y }} \right)^5 \sinh ydy,
\end{equation*}
and
\begin{equation*}
J_2(t)=\int_{\cosh y \geq \frac{1}{\cos r}}e^{-{\frac{y^2-(\cosh^{-1}(\cos r \cosh y))^2}{4t}}}\frac{g_t(0,y)}{t^2} \left(\frac{\cosh^{-1}(\cos r \cosh y) }{\sqrt{\cos^2 r \cosh^2 y }-1} \right)^5 \sinh ydy,
\end{equation*}
The idea is to analyze $J_1(t)$ and $J_2(t)$ by Laplace method. Furthermore, since we are interested in the asymptotic behavior when $t\rightarrow 0$, it suffices to consider the dominant term of $\frac{g_t(0,y)}{t^2}$ only, which is $\frac{y^5}{120}$.

Notice that in $[-\cosh^{-1}(\frac{1}{\cos r}), \cosh^{-1}(\frac{1}{\cos r})]$, $f(y)=y^2+(\arccos(\cos r \cosh y))^2$ has a unique minimum at $y=0$, where 
\begin{equation*}
f''(0)=2(1-r\cot r).
\end{equation*}
Hence by Laplace method, we can easily obtain that 
\begin{equation*}
J_1(t)\sim_{t\rightarrow 0}240\sqrt{\pi}e^{-\frac{r^2}{4t}}\left(\frac{r}{\sin r}\right)^5 \left(\frac{1}{1-r\cot r}\right)^{7/2} t^{7/2}.
\end{equation*}
On the other hand, on $(-\infty,-\cosh^{-1}(\frac{1}{\cos r}))\cup(\cosh^{-1}(\frac{1}{\cos r}),\infty)$, the function $y^2-(\cosh^{-1}(\cos r \cosh y))^2$ has no minimum, which implies that $J_2(t)$ is negligible with respect to $J_1(t)$ in small $t$. This finishes the proof. 
\end{proof}
\end{prop}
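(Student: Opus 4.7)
The strategy is to apply the Laplace method to the integral representation of Proposition~\ref{prop:4} specialized at $\eta = 0$. A preliminary computation (verifying that the $\csc^3 \eta$ singularity in $g_t(\eta,y)$ is removable because the bracketed expression has a triple zero at $\eta = 0$) gives the explicit polynomial form of $g_t(0,y)/t^2$, whose leading monomial in $y$ is $y^5/(120 t^5)$. The problem is thus reduced to evaluating
\[
p_t(r,0) \;=\; \frac{48\, e^{15t}}{\pi^2\sqrt{\pi t}\,\cos^2 r}\int_0^\infty \frac{g_t(0,y)}{t^2}\, e^{-y^2/(4t)}\, q_t(\cos r\,\cosh y)\,\sinh y\,dy.
\]

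I would split the integral at the threshold $y_\ast = \cosh^{-1}(1/\cos r)$, where $\cos r\cosh y$ crosses $1$. On $[0,y_\ast]$, call the contribution $J_1(t)$, the expansion \eqref{eq:3a} for $q_t(\cos\delta)$ applies with $\delta = \arccos(\cos r\cosh y)$; on $[y_\ast,\infty)$, call this $J_2(t)$, the analytic continuation \eqref{eq:3} for $q_t(\cosh\delta)$ applies with $\delta = \cosh^{-1}(\cos r\cosh y)$.

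For $J_1$, the effective phase after combining $e^{-y^2/(4t)}$ with $e^{-\delta^2/(4t)}$ is
\[
f(y) \;=\; y^2 + \arccos(\cos r\,\cosh y)^2,
\]
which has a unique non-degenerate minimum at $y=0$: using $\arccos(\cos r\,\cosh y) = r - \tfrac{y^2}{2}\cot r + O(y^4)$ one computes $f(0)=r^2$, $f'(0)=0$, and $f''(0)=2(1-r\cot r)$, strictly positive on $(0,\pi/2)$. Standard Laplace's method then applies: after rescaling $y = \sqrt{t}\,u$, the leading contribution to the amplitude comes from the $y^5/(120 t^5)$ monomial of $g_t(0,y)/t^2$, the value $(r/\sin r)^5$ of $\bigl(\arccos(\cos r\,\cosh y)/\sqrt{1-\cos^2 r\,\cosh^2 y}\bigr)^5$ at $y=0$, and the equivalent $\sinh y\sim y$. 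Integration against the Gaussian $e^{-(1-r\cot r)y^2/(4t)}$ via the standard formula $\int_0^\infty y^6 e^{-\alpha y^2}dy = 15\sqrt{\pi}/(16\alpha^{7/2})$ produces the $(1-r\cot r)^{-7/2}$ factor and the correct power of $t$.

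For $J_2$, the phase becomes $y^2 - \cosh^{-1}(\cos r\,\cosh y)^2$; it vanishes at $y_\ast$, has no interior critical point, and satisfies $\cosh^{-1}(\cos r\,\cosh y) = y - \log(2/\cos r) + o(1)$ as $y\to\infty$, so it is bounded below by a positive constant on any closed subinterval of $(y_\ast,\infty)$. Hence $J_2(t)$ is exponentially smaller than $J_1(t)$ and does not contribute at the stated order. Assembling the prefactor with $J_1(t)$ and simplifying $\frac{48}{\pi^2(4\pi)^{11/2}} = \frac{3}{2^7\pi^{15/2}}$ yields the claimed asymptotic. The main obstacle is the uniform control of the subdominant terms in the expansions of $g_t(0,y)/t^2$, of $q_t$, and of $\sinh y$, together with the boundary contribution near $y=y_\ast$; following the methods of \cite{MR2545862,FBJW13,FBJW14,MR3130140}, these are absorbed into relative errors of order $O(t)$ thanks to the uniform remainder bounds in \eqref{eq:3a} and \eqref{eq:3}.
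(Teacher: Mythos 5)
Your proof follows exactly the same route as the paper's: specialize the integral representation of Proposition~\ref{prop:4} at $\eta=0$, split at $y_\ast=\cosh^{-1}(1/\cos r)$ into the two ranges corresponding to $q_t(\cos\delta)$ and $q_t(\cosh\delta)$, extract the phase $f(y)=y^2+\arccos(\cos r\cosh y)^2$ with $f(0)=r^2$ and $f''(0)=2(1-r\cot r)$, apply Laplace's method to $J_1$ keeping the $y^5/(120t^5)$ monomial of $g_t(0,y)/t^2$, and dismiss $J_2$ because its phase has no interior minimum and exceeds $r^2$ on the whole range. A small point in your favor: you keep the integration over $(0,\infty)$, consistent with \eqref{eq:2} as stated in Proposition~\ref{prop:4}, and your bookkeeping of the constants (the Gaussian moment $\int_0^\infty y^6e^{-\alpha y^2}dy=15\sqrt\pi/(16\alpha^{7/2})$ and the prefactor $48/(\pi^2(4\pi)^{11/2})=3/(2^7\pi^{15/2})$) lands cleanly on the claimed $\frac{3}{2^7}(\pi t)^{-15/2}$ coefficient, which is tidier than the intermediate constants displayed in the paper.
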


For the case $(r,\eta)$ with $r\neq 0$, the Laplace method no longer works, we need to use the steepest descent method. 

\begin{prop}\label{prop:9} 
Let $r\in (0,\frac{\pi}{2})$, $\eta \in [0,\pi)$. Then when $t \rightarrow 0$,
\begin{equation}\label{eq:7}
p_t(r,\eta)\sim_{t\rightarrow 0}\frac{3\sin \varphi(r,\eta)}{2^7 (\pi t)^{15/2}}\frac{\left( \eta^2+{\varphi(r,\eta)}^2\right)}{\sin^3 \eta \cos^2 r\sin r}  \frac{(\arccos u(r,\eta))^{5} }{\sqrt{1-\frac{u(r,\eta)\arccos u(r,\eta) }{\sqrt{1-u^2(r,\eta)}}}}\frac{e^{-{\frac{(\varphi(r,\eta)+\eta)^2\tan^2 r}{4t\sin^2(\varphi(r,\eta)) }}}}{(1-u^2(r,\eta))^2}
\end{equation}
where $u(r,\eta)=\cos r \cos \varphi(r,\eta)$ and $\varphi(r,\eta)$ is the unique solution in $[0,\pi]$ to the equation
\begin{equation}\label{eq:6}
\varphi(r,\eta)+\eta=\cos r \sin \varphi(r,\eta)\frac{\arccos(\cos \varphi(r,\eta)\cos r)}{\sqrt{1-\cos^2 r \cos^2 \varphi(r,\eta)}}.
\end{equation}

\begin{proof}
From the Proposition~\ref{prop:4}, we can rewrite
\begin{equation*}
p_t(r,\eta)=\frac{24e^{15t}}{{(\pi t)}^{2.5}\cos^2 r \sin^3 \eta}\int_{-\infty}^{\infty}l_t(\eta,y) q_t(\cos r \cosh y)\sinh y dy,
\end{equation*}
where $l_t(\eta,y)$ is given by 
\begin{equation*}
e^{\frac{(\eta+iy)^2}{4t}} \left( y (\eta-3 t \cot \eta) +\frac{1}{2i}(8 t^2 \cot ^2 \eta+4 t^2 \csc ^2 \eta-6 t \eta \cot \eta+2 t+\eta^2-y^2)\right)
\end{equation*}

\begin{equation*}
+e^{\frac{(\eta-iy)^2}{4t}} \left( y (\eta-3 t \cot \eta) -\frac{1}{2i}(8 t^2 \cot ^2 \eta+4 t^2 \csc ^2 \eta-6 t \eta \cot \eta+2 t+\eta^2-y^2)\right).
\end{equation*}
{\tiny }

Since we consider when $t\rightarrow 0$, we only need to consider the dominant terms of $l_t(\eta,y)$, thus we may assume that $l_t(\eta,y)$ can be written as 
\begin{equation*}
e^{-\frac{(y-i\eta)^2}{4t}} \left( y \eta +\frac{1}{2i}(\eta^2-y^2)\right)+e^{-\frac{(y+i\eta)^2}{4t}} \left( y \eta -\frac{1}{2i}(\eta^2-y^2)\right).
\end{equation*}
Hence we obtain that 
\begin{equation*}
p_t(r,\eta)\sim_{t\rightarrow 0}\frac{3}{2^{8} \pi^{8} t^8}(A+B),
\end{equation*}
where
\begin{equation*}
A=\int^{\infty}_{-\infty}\frac{\sinh y}{\cos^2 r \sin^3 \eta} \left( y \eta +\frac{1}{2i}(\eta^2-y^2)\right)e^{-{\frac{(y-i\eta)^2+(\arccos(\cos r \cosh y))^2}{4t}}} \left(\frac{\arccos(\cos r \cosh y) }{\sqrt{1-\cos^2 r \cosh^2 y }} \right)^5 dy,
\end{equation*}
and
\begin{equation*}
B=\int^{\infty}_{-\infty}\frac{\sinh y}{\cos^2 r\sin^3 \eta}\left( y \eta -\frac{1}{2i}(\eta^2-y^2)\right)e^{-{\frac{(y+i\eta)^2+(\arccos(\cos r \cosh y))^2}{4t}}} \left(\frac{\arccos(\cos r \cosh y) }{\sqrt{1-\cos^2 r \cosh^2 y }} \right)^5 dy.
\end{equation*}
For the small time asymptotic of $B$: By applying the steepest descent method, we can constraint the integral on the strip $|Re(y)|<\cosh^{-1}(\frac{1}{\cos r})$ where, due to the result in \cite{FBJW13} (Lemma 3.9),
\begin{equation*}
f(y)=(y+i\eta)^2+(\arccos(\cos r \cosh y))^2
\end{equation*}
has a critical point at $i\varphi(r,\eta)$, where $\varphi(r,\eta)$ is the unique solution in $[0,\pi]$ to the equation \eqref{eq:6} and 
\begin{equation*}
f''(i\varphi(r,\eta))=\frac{2\sin^2 r}{1-u(r,\eta)^2}\left(1-\frac{u(r,\eta)\arccos u(r,\eta)}{\sqrt{1-u^2(r,\eta)}}\right)
\end{equation*}
is positive, where $u(r,\eta)=\cos r \cos \varphi(r,\eta)$. Note that 
\begin{align*}
f(i\varphi(r,\eta))&=(-\varphi(r,\eta)+\eta)^2+\arccos^2(\cos r \cos\varphi(r,\eta)) \\
&=(\varphi(r,\eta)+\eta)^2\left(-1+\frac{1-\cos^2 r \cos^2 \varphi(r,\eta)}{\cos^2 r \sin^2 \varphi(r,\eta)}\right)=(\varphi(r,\eta)+\eta)^2\frac{\tan^2 r}{\sin^2 \varphi(r,\eta)}.
\end{align*}
Thus for a sufficiently small $t>0$, $B$ has the following estimates:
\begin{equation*}
B\sim_{t\rightarrow 0}\frac{\sqrt{4\pi t}\sin \varphi(r,\eta)}{\cos^2 r\sin r \sin^3 \eta}\left( \varphi(r,\eta) \eta +\frac{1}{2}(\eta^2+{\varphi(r,\eta)}^2\right)\frac{(\arccos u(r,\eta))^{5} }{\sqrt{1-\frac{u(r,\eta)\arccos u(r,\eta) }{\sqrt{1-u^2(r,\eta)}}}}\frac{e^{-{\frac{(\varphi(r,\eta)+\eta)^2\tan^2 r}{4t\sin^2(\varphi(r,\eta)) }}}}{(1-u^2(r,\eta))^2}.
\end{equation*}
To estimate $A$, we denote
\begin{equation*}
g(y)=(y-i\eta)^2+(\arccos(\cos r \cosh y))^2.
\end{equation*}
Then easy computations show that $g(y)$ has a critical point at $-i\varphi(r,\eta)$ where $\varphi(r,\eta)$ is as described in \eqref{eq:6}. Thus
\begin{equation*}
A\sim_{t\rightarrow 0}-\frac{\sqrt{4\pi t}\sin \varphi(r,\eta)}{\cos^2 r\sin r\sin^3 \eta}\left(\varphi(r,\eta) \eta -\frac{1}{2}(\eta^2+{\varphi(r,\eta)}^2\right)\frac{(\arccos u(r,\eta))^{5} }{\sqrt{1-\frac{u(r,\eta)\arccos u(r,\eta) }{\sqrt{1-u^2(r,\eta)}}}}\frac{e^{-{\frac{(\varphi(r,\eta)+\eta)^2\tan^2 r}{4t\sin^2(\varphi(r,\eta)) }}}}{(1-u^2(r,\eta))^2}.
\end{equation*}
By putting $A\sim_{t\rightarrow 0}$ and $B\sim_{t\rightarrow 0}$ together, we obtain \eqref{eq:7}.

\end{proof}

\end{prop}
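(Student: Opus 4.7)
The plan is to start from the integral representation \eqref{eq:2} of Proposition~\ref{prop:4} and apply the steepest descent method, following the scheme used for the complex and quaternionic Hopf fibrations. First I would rewrite the integral over $(0,\infty)$ as an integral over $(-\infty,\infty)$ by symmetrizing, and split the factor $\sin(\eta y/(2t))$ (resp.\ $\cos(\eta y/(2t))$) inside $g_t(\eta,y)$ into complex exponentials so as to absorb them into the Gaussian factor $e^{-(y^2+\eta^2)/(4t)}$. This produces two conjugate integrals whose phases are of the form
\begin{equation*}
(y\mp i\eta)^2 + \bigl(\arccos(\cos r\cosh y)\bigr)^2,
\end{equation*}
after one substitutes the short-time expansion \eqref{eq:3} for $q_t(\cos r\cosh y)$. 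Because $t\to 0$, only the leading polynomial part of $g_t(\eta,y)/t^2$ contributes, and one obtains (up to negligible remainders) two integrals $A$ and $B$ as in the statement.

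Next, I would locate the critical points of the phases $f(y)=(y-i\eta)^2+(\arccos(\cos r\cosh y))^2$ and $g(y)=(y+i\eta)^2+(\arccos(\cos r\cosh y))^2$. Differentiating and using that $\cos r\cosh y$ is of modulus $\le 1$ on the strip $|\operatorname{Re} y|<\cosh^{-1}(1/\cos r)$, one finds that $g$ has a unique critical point at $y_0=-i\varphi(r,\eta)$ where $\varphi(r,\eta)$ solves \eqref{eq:6}, and symmetrically $f$ has its critical point at $i\varphi(r,\eta)$. The existence and uniqueness of $\varphi$ in $[0,\pi]$, together with the computation of
\begin{equation*}
f''(i\varphi(r,\eta))=\frac{2\sin^2 r}{1-u^2}\Bigl(1-\tfrac{u\arccos u}{\sqrt{1-u^2}}\Bigr),\qquad u=\cos r\cos\varphi(r,\eta),
\end{equation*}
can be taken from Lemma 3.9 of \cite{FBJW13} since the argument there only uses that the inner function is $\arccos(\cos r\cosh y)$. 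Evaluating the phase at the critical point gives
\begin{equation*}
f(i\varphi(r,\eta))=(\varphi(r,\eta)+\eta)^2\,\frac{\tan^2 r}{\sin^2\varphi(r,\eta)},
\end{equation*}
using the identity $\arccos^2(\cos r\cos\varphi)=(\varphi+\eta)^2(1-\cos^2r\cos^2\varphi)/(\cos^2r\sin^2\varphi)$ obtained by squaring \eqref{eq:6}. A standard Gaussian stationary-phase evaluation then supplies the prefactors in the statement.

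Then I would combine $A$ and $B$: since the two phases are complex conjugates and the polynomial prefactors $y\eta\pm\frac{1}{2i}(\eta^2-y^2)$ are conjugate as well, evaluating each at its critical point yields the opposite sign on the $\varphi(r,\eta)\eta$ term and the same sign on $\eta^2+\varphi(r,\eta)^2$, so the sum $A+B$ collapses to the factor $(\eta^2+\varphi(r,\eta)^2)$ quoted in \eqref{eq:7}. Multiplying by the prefactor $3/(2^8\pi^8 t^8)$ and the extra $\sqrt{4\pi t}$ from the Gaussian integration gives the claimed $t^{-15/2}$ rate.

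The main obstacle is the justification of the contour deformation: the function $\arccos(\cos r\cosh y)$ only admits a real-valued interpretation on $|y|\le \cosh^{-1}(1/\cos r)$, and one must analytically continue across that interval, shift the contour to pass through $\pm i\varphi(r,\eta)$, and show that the pieces of the deformed contour outside a neighbourhood of the critical point contribute terms exponentially smaller than $e^{-(\varphi+\eta)^2\tan^2 r/(4t\sin^2\varphi)}$. This is exactly where the machinery of \cite{MR1776501} and Section 6 of \cite{MR3130140} is invoked, and the excerpt explicitly signals that these technical verifications are omitted. I would therefore follow those references verbatim, pausing only to check that the sign of $\operatorname{Re} f$ on the deformed contour is controlled by $f(i\varphi(r,\eta))$, which follows from the convexity argument already carried out for the quaternionic case.
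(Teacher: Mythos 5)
Your proposal follows the same route as the paper: rewrite the integral representation of Proposition~\ref{prop:4} over $\mathbb{R}$, split the $\sin$ and $\cos$ factors in $g_t(\eta,y)$ into complex exponentials, keep the leading polynomial, apply steepest descent to the two conjugate phases using Lemma~3.9 of \cite{FBJW13} for the location of the critical points, and recombine. Note one small labeling slip: with your definitions $f(y)=(y-i\eta)^2+(\arccos(\cos r\cosh y))^2$ and $g(y)=(y+i\eta)^2+(\arccos(\cos r\cosh y))^2$, differentiating shows that $g$ has its critical point at $+i\varphi(r,\eta)$ and $f$ at $-i\varphi(r,\eta)$ (you have the two swapped), but this does not affect the outcome of the argument.
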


\begin{rema}
In proposition~\ref{prop:9}, if we let $\eta=0$, then the equation \eqref{eq:6} has a unique solution at $\varphi=0$ and 
\begin{equation*}
\lim_{\eta \rightarrow 0}\frac{\varphi(r,\eta)}{\eta}=-\frac{1}{1-r\cot r}.
\end{equation*}	
Thus equation \eqref{eq:7} gives that 
\begin{equation*}
p_t(r,0)\sim_{t\rightarrow 0}\frac{3}{2^7}\frac{1}{( \pi t)^{15/2}\cos^2 r}e^{-\frac{r^2}{4t}}\left(\frac{r}{\sin r}\right)^5 \left(\frac{1}{1-r\cot r}\right)^{7/2}
\end{equation*}
which agrees with the result in proposition~\ref{prop:8}.	
\end{rema}

\begin{rema}
	By symmetry, the sub-Riemannian distance from the north pole to any point on $\mathbb{S}^{15}$ only depends on $r$ and $\eta$. If we denote it by $d(r,\eta)$, then from the previous propositions, using the fact that from \cite{Le1,Le2} one has
	\[
	d^2(r,\eta)=-\lim_{t \to 0} 4t \ln p_t(r,\eta),
	\]
	one deduces:
	
(1) For $\eta\in [0,\pi)$,
\begin{equation*}
d^2(0,\eta)=2\pi \eta-\eta^2.
\end{equation*}	

(2) For $\eta \in [0,\pi)$, $r\in[0,\frac{\pi}{2})$,
\begin{equation*}
d^2(r,\eta)=\frac{(\varphi(r,\eta)+\eta)^2\tan^2 r}{\sin^2(\varphi(r,\eta)) }.
\end{equation*}

(3) For  $r\in[0,\frac{\pi}{2})$,
\begin{equation*}
d(r,0)=r.
\end{equation*}
In particular, the sub-Riemannian diameter of $\mathbb{S}^{15}$ is $\pi$.	
	
\end{rema}

\bibliographystyle{spmpsci}
\bibliography{reference}

\end{document}